\def\sqr#1#2{{\vcenter{\vbox{\hrule height.#2pt
              \hbox{\vrule width.#2pt height#1pt \kern#1pt \vrule width.#2pt}
              \hrule height.#2pt}}}}
\def\5n{\negthinspace \negthinspace \negthinspace \negthinspace \negthinspace }
\def\4n{\negthinspace \negthinspace \negthinspace \negthinspace }
\def\3n{\negthinspace \negthinspace \negthinspace }
\def\2n{\negthinspace \negthinspace }
\def\1n{\negthinspace }
\def\dbE{\mathbb{E}}
\def\dbP{\mathbb{P}}
\newcommand{\ep}{\varepsilon}
\newcommand{\R}{\mathbb{R}}
\newcommand{\E}{\mathbb{E}}
\newcommand{\PP}{\mathbb{P}}
\def\ds{\displaystyle}
\def\ns{\noalign{\ss}}
\def\cU{{\cal U}}
\def\ba{\bar{a}}
\def\bc{\bar{c}}
\def\be{\bar{e}}
\def\bl{\bar{l}}
\def\bp{\bar{p}}
\def\br{\bar{r}}
\def\bt{\bar{t}}
\def\ss{\smallskip}
\def\ms{\medskip}
\def\qq{\qquad}
\def\rf{\eqref}
\def\cd{\cdot}
\def\({\Big (}
\def\){\Big )}
\def\[{\Big[}
\def\]{\Big]}
\def\bde{\begin{definition}\label}
\def\ede{\end{definition}}
\def\be{\begin{equation}}
\def\bel{\begin{equation}\label}
\def\ee{\end{equation}}
\def\bt{\begin{theorem}\label}
\def\et{\end{theorem}}
\def\bc{\begin{corollary}\label}
\def\ec{\end{corollary}}
\def\bl{\begin{lemma}\label}
\def\el{\end{lemma}}
\def\bp{\begin{proposition}\label}
\def\ep{\end{proposition}}
\def\bas{\begin{assumption}\label}
\def\eas{\end{assumption}}
\def\br{\begin{remark}\label}
\def\er{\end{remark}}
\def\bex{\begin{example}\label}
\def\ex{\end{example}}
\def\ba{\begin{array}}
\def\ea{\end{array}}
\def\ed{\end{document}}
\def\olb\
\def\eps{\epsilon}
\def\square#1{\vbox{\hrule\hbox{\vrule height#1%
     \kern#1\vrule}\hrule}}
\def\rectangle#1#2{\vbox{\hrule\hbox{\vrule height#1%
     \kern#2\vrule}\hrule}}
\font\tenbb=msbm10 \font\sevenbb=msbm7 \font\fivebb=msbm5
\newtheorem{theorem}{\hskip 1.3em Theorem}[section]
\newtheorem{definition}[theorem]{\hskip 1.3em Definition}
\newtheorem{proposition}[theorem]{\hskip 1.3em Proposition}
\newtheorem{corollary}[theorem]{\hskip 1.3em Corollary}
\newtheorem{lemma}[theorem]{\hskip 1.3em Lemma}
\newtheorem{remark}[theorem]{\hskip 1.3em Remark}
\newtheorem{example}[theorem]{\hskip 1.3em Example}
\newtheorem{assumption}[theorem]{\hskip 1.3em Assumption}
\begin{document}

\title{\Large \bf Mean-field anticipated BSDEs driven by fractional Brownian motion and related stochastic control problem}
\author
{\textbf{Soukaina Douissi}$^{1}$,
	\textbf{Jiaqiang Wen}$^{2}$
	, \textbf{Yufeng Shi}$^{2,3}$
	\vspace{3mm} \\
	\normalsize{$^{1}$ Laboratory LIBMA, Faculty Semlalia, University Cadi Ayyad, Marrakech, Morocco}\\
	\normalsize{$^{2}$ School of Mathematics, Shandong University, Jinan 250100, China}\\
	\normalsize{$^{3}$School of Statistics, Shandong University of Finance and Economics, Jinan 250014, China}
}
\date{}

\renewcommand{\thefootnote}{\fnsymbol{footnote}}

\footnotetext[0]{The first author is supported by the Erasmus + International Credit mobility between Linnaeus University, V\"{a}xj\"{o}, Sweden and Cadi Ayyad University, Marrakech, Morocco for the academic year 2016-2017.
\\	
The second and third authors are supported  by NNSF of China (Grant Nos. 11371226, 11071145, 11526205, 11626247 and 11231005), the Foundation for Innovative Research Groups of National Natural Science Foundation of China (Grant No. 11221061) and the 111 Project (Grant No. B12023).\\
{E-mail addresses: douissi.soukaina@gmail.com, jqwen59@gmail.com, yfshi@sdu.edu.cn}
}

\maketitle
\begin{abstract}
In this paper, we focus on mean-field anticipated backward stochastic differential equations (MF-BSDEs, for short) driven by fractional Brownian motion with Hurst parameter $H>1/2$.
First, the existence and uniqueness of this new type of BSDEs are established using two different approaches.
Then, a comparison theorem for such BSDEs is obtained.
Finally, as an application of this type of equations, a related stochastic optimal control problem is studied.
\end{abstract}

\textbf{Keywords}: Mean-field backward stochastic differential equation; Anticipated backward stochastic differential equation;  Fractional Brownian motion; Stochastic control.

\textbf{2010 Mathematics Subject Classification}: 60H10, 60H20, 60G22, 93E20.

\section{Introduction}

A centered Gaussian process $B^{H}=\{ B^{H}_{t},t\geq 0 \}$ is called a
fractional Brownian motion (fBm, for short) with Hurst parameter $H \in (0, 1)$ if its covariance is
\begin{equation*}
 \dbE(B^{H}_{t}B^{H}_{s}) = \frac{1}{2} (t^{2H} + s^{2H} - |t-s|^{2H}), \qq t,s \geq 0.
\end{equation*}
When $H=1/2$, this process becomes a classical Brownian motion.
For $H>1/2$, $B^{H}$ exhibits the property of long range dependence,
which makes the fBm an important driving noise in many fields such as finance, telecommunication networks, and physics.

\ms

In 1990, the nonlinear backward stochastic differential equations (BSDEs, for short) were introduced by
 Pardoux and Peng \cite{Peng}.
In the next two decades, BSDEs have been widely used in different fields of mathematical finance (see \cite{Peng2}), stochastic control (see \cite{Yong5}), and partial differential equations (see \cite{Peng92}).
At the same time, for better applications, BSDE itself has been developed into many different branches. For example, Buckdahn et al. \cite{Buckdahn} and Buckdahn, Li and Peng \cite{Buckdahn2} introduced the so-called mean-field BSDEs, owing to the fact that mathematical mean-field approaches have important applications in many domains,
such as  Economics, Physics and Game Theory (see Lasry and Lions \cite{Lasry}, Buckdahn et al. \cite{Buckdahn2017} and the papers therein). Peng and Yang \cite{PY2009} introduced a new type of BSDEs, called anticipated BSDEs, which can be regarded as a new duality type of stochastic differential delay equations.
Furthermore, BSDEs driven by fractional Brownian motion, also known as fractional BSDEs,
with Hurst parameter $H>1/2$ were studied by Hu and Peng \cite{Hu}.
Then Maticiuc and Nie \cite{Maticiuc} obtained some general results of fractional BSDEs through a rigorous approach. Buckdahn and Jing \cite{Buckdahn3} studied fractional mean-field stochastic differential equations (SDEs, for short) with $H>1/2$ and a stochastic control problem.
Some other recent developments of fractional BSDEs can be found in
Bender \cite{Bender}, Borkowska \cite{Borkowska}, Maticiuc and Nie \cite{Maticiuc},
 Wen and Shi \cite{Wen,WS}, etc., among theory and applications.

\ms

As another important development of BSDEs, mean-field anticipated BSDEs (MF-ABSDEs, for short) driven by fBm have significant applications in stochastic optimal control problems with delay. In \cite{DHA},  Agram, Douissi and Hilbert solved the optimal control problem of mean-field stochastic delayed differential equations, where they considered the integral with respect to the fBm  of the adjoint BSDE in the Wick sense, (see \cite{BOZ}), they proved the set of necessary and sufficient maximum principles and gave some applications.
In our work, we investigate another approach to solve this problem. Namely, we focus on MF-ABSDEs driven by fBm when the integral with respect to the fBm is in the divergence sense, (see Decreusefond and \"{U}st\"{u}nel \cite{Decreusefond}, and  Nualart \cite{Nualart}). Specifically, we study the following equation,
\bel{0}\left\{\ba{ll}
\ds Y_{t}=g(\eta_{T})+\int_t^T\dbE'[f(s,\eta_{s},Y'_{s},Z'_{s},Y_{s},Z_{s},
Y'_{s+\delta(s)},Z'_{s+\zeta(s)},Y_{s+\delta(s)},Z_{s+\zeta(s)})]ds\\
\ns\ds\qq\qq \ \ \ -\int_t^TZ_{s}dB_{s}^{H}, \qq t\in [0,T];\\
\ns\ds Y_{t}=g(\eta_{t}), \qq  Z_{t}=h(\eta_{t}), \qq t\in[T,T+K],
\ea\right.\ee
where $\delta(\cdot)$ and $\zeta(\cdot)$ are two deterministic $\mathbb{R}^{+}$-valued continuous functions defined on $[0,T]$. First, we use two different approaches to prove the existence and uniqueness of solutions of MF-ABSDE \rf{0}.
Interestingly, the conditions required by the first approach are weaker then the second one, however, the second approach is more convenient than the first one.
Second, as a fundamental tool, the comparison theorem plays an important role in the theory and applications of BSDEs. We establish a comparison theorem for this type of MF-ABSDEs.
Finally, as an application of such BSDEs, a stochastic optimal control problem is studied and the related sufficient maximum principle is obtained.

\ms

We organize this article as follows.
Some  preliminaries about fBm and other required definitions are presented in Section 2.
The existence and uniqueness of fractional MF-ABSDEs are proved by two different approaches in Section 3.
We derive a comparison theorem for such type of equations in Section 4
and investigate a stochastic optimal control problem in Section 5.

\section{Preliminaries}

We recall, in this section, some basic results of fractional Brownian motion and the differentiability of functions of measures.

\subsection{Fractional Brownian motion}

In this subsection, some preliminaries about fractional Brownian motion are presented.
For a deeper discussion, the readers may refer to the articles such as Decreusefond and \"{U}st\"{u}nel \cite{Decreusefond}, Hu \cite{Hu3} and  Nualart \cite{Nualart}, etc.

\ms

Assume $B^{H}=\{ B^{H}_{t},t\geq 0 \}$ is a fBm defined on a complete probability space $(\Omega,\mathcal{F},\dbP)$,
and the filtration $\mathcal{F}$ is generated by $B^{H}$.
Let $H >1/2$ throughout this paper.
Moreover, we denote $\phi(x) = H(2H - 1)|x|^{2H-2},$ where $x \in \mathbb{R}$,
and suppose $\xi$ and $\psi$ are two continuous functions defined in $[0,T]$.
Define
\begin{equation}\label{11}
  \langle \xi,\psi \rangle_{T} = \int_0^T \int_0^T \phi(u-v) \xi_{u} \psi_{v} dudv, \ \
  and \ \   \| \xi \|_{T}^{2} =  \langle \xi,\xi  \rangle_{T}.
\end{equation}
Then $\langle \xi,\psi \rangle_{T}$ is a Hilbert scalar product.
Under this scalar product, we denote by $\mathcal{H}$ the completion of the continuous functions.
Besides, denote by $\mathcal{P}_{T}$ the set of all polynomials of fBm in $[0,T]$, i.e.,
every element of $\mathcal{P}_{T}$ is of the form
\begin{equation*}
  \Phi(\omega) = h \left(\int_0^T \xi_{1}(t) dB_{t}^{H},...,\int_0^T \xi_{n}(t) dB_{t}^{H} \right),
\end{equation*}
where $h$ is a polynomial function and $\xi_{i}\in\mathcal{H}, i=1,2,...,n$.
In addition,
Malliavin derivative operator $D_{s}^{H}$ of $\Phi\in \mathcal{P}_{T}$ is defined by
\begin{equation*}
  D_{s}^{H}\Phi = \sum\limits_{i=1}^{n} \frac{\partial h}{\partial x_{i}}
               \left(\int_0^T \xi_{1}(t) dB_{t}^{H},...,\int_0^T \xi_{n}(t) dB_{t}^{H} \right)\xi_{i}(s), \qq
               s\in [0,T].
\end{equation*}
Since the derivative operator
  $D^{H}:L^{2}(\Omega,\mathcal{F}, P) \rightarrow (\Omega,\mathcal{F}, \mathcal{H})$ is closable,
one can denote by $\mathbb{D}^{1,2}$ the completion of $\mathcal{P}_{T}$ under the following norm
$$\| \Phi \|^{2}_{1,2} \triangleq \dbE|\Phi|^{2} + \dbE\|D^{H}_{s} \Phi\|^{2}_{T}.\qq\qq\qq \ $$
Furthermore, we introduce the following derivative
\begin{equation*}
  \mathbb{D}_{t}^{H}\Phi = \int_0^T \phi(t-s) D_{s}^{H}\Phi ds, \qq t\in[0,T]. \
\end{equation*}
Now, let us consider the adjoint operator of Malliavin derivative operator $D^{H}$.
We call this operator the divergence operator, which represents the divergence type integral and
 is denoted by $\delta(\cdot)$.

\begin{definition}
A process $u\in L^{2}(\Omega\times[0,T];\mathcal{H})$ is said to belongs to the domain $Dom(\delta)$,
if there exists $\delta(u)\in L^{2}(\Omega,\mathcal{F},\dbP)$ satisfying the following duality relationship
\begin{equation*}\label{}
  \dbE(\Phi\delta(u))=\dbE(\langle D^{H}_{\cdot} \Phi,u \rangle_{T}), \ \ for \ \ every \ \ \Phi\in\mathcal{P}_{T}.
\end{equation*}
Moreover, if $u\in Dom(\delta)$, the divergence type integral of $u$ w.r.t. $B^{H}$ is defined by putting
$\int_0^T u_{s} d B^{H}_{s}=: \delta(u)$.
\end{definition}

It should be pointed out that, in this paper, unless otherwise specified,
 the $d B^{H}$-integral represents the divergence type integral.

\begin{proposition}[Hu \cite{Hu3}, Proposition 6.25]\label{2}
Let $\mathbb{L}^{1,2}_{H}$ be the space of all processes $F : \Omega\times[0,T] \rightarrow \mathcal{H}$ satisfying
$ \dbE \left( \| F \|_{T}^{2} + \int_0^T \int_0^T |\mathbb{D}_{s}^{H}F_{t}|^{2} dsdt \right) < \infty.$
Then, if $F \in \mathbb{L}^{1,2}_{H}$, the divergence type integral
$\int_0^T F_{s} dB_{s}^{H}$ exists in $L^{2}(\Omega,\mathcal{F}, \dbP)$, and
\begin{equation*}
  \dbE \left( \int_0^T F_{s} dB_{s}^{H} \right) = 0; \qq
   \dbE \left( \int_0^T F_{s} dB_{s}^{H} \right)^{2}
 =\dbE \left( \| F \|_{T}^{2} + \int_0^T \int_0^T \mathbb{D}_{s}^{H}F_{t} \mathbb{D}_{t}^{H}F_{s} dsdt \right).
\end{equation*}
\end{proposition}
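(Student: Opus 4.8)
This is the fractional counterpart of the classical isometry for the divergence (Skorokhod) integral, in which the role of $L^{2}([0,T])$ is played by the Hilbert space $\mathcal{H}$ equipped with $\langle\cdot,\cdot\rangle_{T}$. The plan is to (i) establish the two identities for a dense class of elementary processes via an explicit computation based on Malliavin integration by parts, and then (ii) pass to general $F\in\mathbb{L}^{1,2}_{H}$ by an approximation and closedness argument.

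For step (i), I would work with elementary processes $F=\sum_{i=1}^{n}G_{i}\,\xi_{i}$, where $G_{i}\in\mathcal{P}_{T}$ and $\xi_{i}\in\mathcal{H}$ (one may even take the $\xi_{i}$ continuous); such processes are dense in $\mathbb{L}^{1,2}_{H}$ for the norm $\big(\dbE\|F\|_{T}^{2}+\dbE\int_{0}^{T}\int_{0}^{T}|\mathbb{D}_{s}^{H}F_{t}|^{2}\,ds\,dt\big)^{1/2}$. Straight from the duality relation defining $\delta$ one verifies that $F\in Dom(\delta)$ with
\begin{equation*}
\delta(F)=\sum_{i=1}^{n}\Big(G_{i}\int_{0}^{T}\xi_{i}(t)\,dB_{t}^{H}-\langle D_{\cdot}^{H}G_{i},\xi_{i}\rangle_{T}\Big),
\end{equation*}
the first term being the Wiener integral of the deterministic $\xi_{i}$ and the second the It\^o--Skorokhod correction. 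The first identity is then immediate: applying the duality relation with test functional $\Phi\equiv1$ and using $D^{H}1=0$ gives $\dbE\big(\int_{0}^{T}F_{s}\,dB_{s}^{H}\big)=\dbE\big(\langle D_{\cdot}^{H}1,F\rangle_{T}\big)=0$. For the second identity I would substitute the explicit expression for $\delta(F)$ into $\dbE(\delta(F)^{2})$, expand the finite double sum, and repeatedly apply the Malliavin integration-by-parts identity $\dbE\big(A\int_{0}^{T}\xi\,dB^{H}\big)=\dbE\big(\langle D_{\cdot}^{H}A,\xi\rangle_{T}\big)$, the product rule for $D^{H}$, and $\langle D^{H}\big(\int_{0}^{T}\xi\,dB^{H}\big),\psi\rangle_{T}=\langle\xi,\psi\rangle_{T}$. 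After the terms containing a Wiener integral cancel pairwise, the surviving contributions are exactly $\dbE\|F\|_{T}^{2}=\dbE\sum_{i,j}G_{i}G_{j}\langle\xi_{i},\xi_{j}\rangle_{T}$ and $\dbE\sum_{i,j}\langle D^{H}G_{i},\xi_{j}\rangle_{T}\langle D^{H}G_{j},\xi_{i}\rangle_{T}$; the latter equals $\dbE\int_{0}^{T}\int_{0}^{T}\mathbb{D}_{s}^{H}F_{t}\,\mathbb{D}_{t}^{H}F_{s}\,ds\,dt$, since $\mathbb{D}_{s}^{H}F_{t}=\sum_{i}(\mathbb{D}_{s}^{H}G_{i})\,\xi_{i}(t)$ and $\int_{0}^{T}(\mathbb{D}_{s}^{H}G_{i})\,\xi_{j}(s)\,ds=\langle D^{H}G_{i},\xi_{j}\rangle_{T}$. (Alternatively, the same computation can be carried out through the Wiener chaos decomposition of $F$ using the orthogonality of multiple integrals, the symmetrizations producing the same two pieces.)

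For step (ii), given $F\in\mathbb{L}^{1,2}_{H}$ I would choose elementary processes $F^{(k)}\to F$ in the $\mathbb{L}^{1,2}_{H}$-norm. Applying the second identity of step (i) to $F^{(k)}-F^{(\ell)}$ shows that $\{\delta(F^{(k)})\}_{k}$ is Cauchy in $L^{2}(\Omega,\mathcal{F},\dbP)$; its limit satisfies the defining duality relation (pass to the limit there), so $F\in Dom(\delta)$ and $\int_{0}^{T}F_{s}\,dB_{s}^{H}=\lim_{k}\delta(F^{(k)})$ in $L^{2}$. Since both right-hand sides in the statement depend continuously on $F$ in the $\mathbb{L}^{1,2}_{H}$-norm, letting $k\to\infty$ transfers the two identities to $F$. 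The step I expect to be the genuine obstacle is the algebra in the second-moment computation: one must track the kernel $\phi$, which occurs both inside each operator $\mathbb{D}^{H}$ and inside every pairing $\langle\cdot,\cdot\rangle_{T}$, and confirm that the non-diagonal contributions assemble into precisely $\int_{0}^{T}\int_{0}^{T}\mathbb{D}_{s}^{H}F_{t}\,\mathbb{D}_{t}^{H}F_{s}\,ds\,dt$ rather than, say, the same expression with one $\mathbb{D}^{H}$ replaced by $D^{H}$ or with a spurious extra factor of $\phi$. The density and closedness parts of step (ii) are routine once the elementary processes are chosen so that the $\mathbb{L}^{1,2}_{H}$-norm controls both the $\|\cdot\|_{T}$-part and the derivative part.
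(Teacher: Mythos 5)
The paper does not prove this proposition; it is imported verbatim from Hu's memoir (Proposition 6.25 there) as a background fact, so there is no internal proof to compare against. Your outline is correct and is essentially the standard argument used in that reference: establish $\delta(F)=\sum_i\big(G_i\int_0^T\xi_i\,dB^H-\langle D^H G_i,\xi_i\rangle_T\big)$ for elementary processes, compute the second moment by repeated application of the duality relation (your single- and cross-term bookkeeping, with the derivative variable of one factor pairing against the time variable of the other, does produce exactly $\dbE\|F\|_T^2+\dbE\int_0^T\int_0^T\mathbb{D}_s^HF_t\,\mathbb{D}_t^HF_s\,ds\,dt$), and then extend to all of $\mathbb{L}^{1,2}_H$ by density and the closedness of the divergence operator.
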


\begin{proposition}[Hu \cite{Hu3}, Theorem 10.3]\label{4}
Suppose  $g$ and  $f$ are two deterministic continuous functions. Let
\begin{equation*}
  X_{t} = X_{0} + \int_0^t g_{s} ds + \int_0^t f_{s} dB_{s}^{H}, \qq t\in [0,T],\qq\qq \
\end{equation*}
where $X_{0}$ is a constant. Then, if $F \in C^{1,2}([0, T ] \times \mathbb{R})$, one has
$$\ba{ll}
\ds F(t,X_{t})=F(0,X_{0})+ \int_0^t \frac{\partial F}{\partial s}(s,X_{s}) ds
  + \int_0^t \frac{\partial F}{\partial x}(s,X_{s})g_{s} ds \\
\ns\ds\qq\qq \ \ + \int_0^t \frac{\partial F}{\partial x}(s,X_{s}) f_{s} dB_{s}^{H}
+ \frac{1}{2}\int_0^t \frac{\partial^{2} F}{\partial x^{2}}(s,X_{s}) \bigg[\frac{d}{ds} \| f \|_{s}^{2}\bigg] ds,
  \qq  t\in [0,T].
\ea$$
\end{proposition}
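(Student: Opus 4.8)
The plan is a second-order Taylor expansion along a refining sequence of partitions, in which the divergence-type integral is handled through the Malliavin product rule $\Phi\,\delta(v)=\delta(\Phi v)+\langle D^{H}\Phi,v\rangle_{T}$, valid for deterministic $v\in\mathcal H$ and $\Phi\in\mathbb{D}^{1,2}$ with $\Phi v\in\mathrm{Dom}(\delta)$. A preliminary reduction lets us assume that $F,\partial_sF,\partial_xF,\partial_{xx}F$ are all bounded and uniformly continuous on $[0,T]\times\mathbb R$. Indeed, $X$ has a continuous modification (for deterministic $f$, $\dbE|\int_t^{t'}f_s\,dB^H_s|^2=\|f\mathbf{1}_{[t,t']}\|_T^2\le C|t'-t|^{2H}$ by Proposition \ref{2}, so Kolmogorov's criterion applies), hence the events $A_N=\{\sup_{[0,T]}|X_s|\le N\}$ increase to $\Omega$ a.s.; on $A_N$ all five terms of the formula for $F$ coincide with those of the formula for a truncation $F_N\in C^{1,2}$ that equals $F$ on $[0,T]\times[-N,N]$ and has bounded derivatives (for the divergence-integral term one invokes the local property of $\delta$), so it suffices to prove the formula under the boundedness hypothesis and let $N\to\infty$.

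Fix a partition $0=t_0<\dots<t_n=t$ of mesh $|\pi|$ and set $\Delta_it=t_{i+1}-t_i$, $\Delta_iX=X_{t_{i+1}}-X_{t_i}$. Telescoping $F(t,X_t)-F(0,X_0)=\sum_i\big(F(t_{i+1},X_{t_{i+1}})-F(t_i,X_{t_i})\big)$ and splitting each summand into a time increment (mean value theorem, as $F\in C^1$ in $s$) and an $x$ increment (exact second-order Taylor with Lagrange remainder, as $F\in C^2$ in $x$), one gets
$$F(t,X_t)-F(0,X_0)=\sum_i\partial_sF(\theta_i,X_{t_{i+1}})\Delta_it+\sum_i\partial_xF(t_i,X_{t_i})\Delta_iX+\tfrac12\sum_i\partial_{xx}F(t_i,\xi_i)(\Delta_iX)^2 ,$$
with $\theta_i\in[t_i,t_{i+1}]$ and $\xi_i$ between $X_{t_i}$ and $X_{t_{i+1}}$. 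The first sum tends a.s. to $\int_0^t\partial_sF(s,X_s)\,ds$. For the third, $\Delta_iX=\int_{t_i}^{t_{i+1}}g_s\,ds+\int_{t_i}^{t_{i+1}}f_s\,dB^H_s$ with $\dbE|\int_{t_i}^{t_{i+1}}f_s\,dB^H_s|^2=\|f\mathbf{1}_{[t_i,t_{i+1}]}\|_T^2\le C(\Delta_it)^{2H}$, so $\sum_i\dbE|(\Delta_iX)^2|\le C\sum_i(\Delta_it)^{2H}\le C|\pi|^{2H-1}t\to0$ since $2H>1$; with $\partial_{xx}F$ bounded, the entire second-order Taylor sum vanishes in $L^1$. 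This is precisely where $H>1/2$ enters, and it reveals that the second-order term in the formula is \emph{not} of classical quadratic-variation origin.

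It remains to analyse the middle sum. Its drift part $\sum_i\partial_xF(t_i,X_{t_i})\int_{t_i}^{t_{i+1}}g_s\,ds$ converges a.s. to $\int_0^t\partial_xF(s,X_s)g_s\,ds$. For the divergence part, write $\int_{t_i}^{t_{i+1}}f_s\,dB^H_s=\delta(f\mathbf{1}_{[t_i,t_{i+1}]})$ and apply the product rule:
$$\partial_xF(t_i,X_{t_i})\,\delta(f\mathbf{1}_{[t_i,t_{i+1}]})=\delta\big(\partial_xF(t_i,X_{t_i})f\mathbf{1}_{[t_i,t_{i+1}]}\big)+\big\langle D^{H}\partial_xF(t_i,X_{t_i}),\,f\mathbf{1}_{[t_i,t_{i+1}]}\big\rangle_T .$$
Summing over $i$, the $\delta$-terms assemble into $\delta(u^\pi f)$, where $u^\pi_s=\sum_i\partial_xF(t_i,X_{t_i})\mathbf{1}_{[t_i,t_{i+1})}(s)$ is the step approximation of $s\mapsto\partial_xF(s,X_s)$; using the chain rule $D^{H}_r\partial_xF(s,X_s)=\partial_{xx}F(s,X_s)f_r\mathbf{1}_{[0,s]}(r)$ one checks $\partial_xF(\cdot,X_\cdot)f\in\mathbb{L}^{1,2}_{H}$ and $u^\pi f\to\partial_xF(\cdot,X_\cdot)f$ in $\mathbb{L}^{1,2}_{H}$, so by the closedness expressed in Proposition \ref{2}, $\delta(u^\pi f)\to\int_0^t\partial_xF(s,X_s)f_s\,dB^H_s$.

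Finally, by the same chain rule $\big\langle D^{H}\partial_xF(t_i,X_{t_i}),f\mathbf{1}_{[t_i,t_{i+1}]}\big\rangle_T=\partial_{xx}F(t_i,X_{t_i})\int_0^{t_i}\!\!\int_{t_i}^{t_{i+1}}\phi(u-v)f_uf_v\,du\,dv$, and from the identity $\|f\|_{t_{i+1}}^2-\|f\|_{t_i}^2=2\int_0^{t_i}\!\!\int_{t_i}^{t_{i+1}}\phi(u-v)f_uf_v\,du\,dv+\|f\mathbf{1}_{[t_i,t_{i+1}]}\|_T^2$ this equals $\tfrac12\partial_{xx}F(t_i,X_{t_i})\big(\|f\|_{t_{i+1}}^2-\|f\|_{t_i}^2-\|f\mathbf{1}_{[t_i,t_{i+1}]}\|_T^2\big)$. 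Summed over $i$, the $\|f\mathbf{1}_{[t_i,t_{i+1}]}\|_T^2$ contributions are $O(|\pi|^{2H-1})$ and vanish, while $\tfrac12\sum_i\partial_{xx}F(t_i,X_{t_i})\int_{t_i}^{t_{i+1}}\frac{d}{ds}\|f\|_s^2\,ds\to\tfrac12\int_0^t\partial_{xx}F(s,X_s)\frac{d}{ds}\|f\|_s^2\,ds$ since $\frac{d}{ds}\|f\|_s^2$ is continuous. Collecting the limits (passing to a subsequence that realises the $L^1$-convergences a.s.) gives the asserted identity. The main obstacle is the convergence $u^\pi f\to\partial_xF(\cdot,X_\cdot)f$ in $\mathbb{L}^{1,2}_{H}$ — controlling simultaneously the $\|\cdot\|_T$-norm and the $\mathbb{D}^{H}$-norm of the error — which rests on the chain rule for Malliavin derivatives, the uniform continuity of $\partial_xF,\partial_{xx}F$, the path regularity of $X$, and the integrability of $\phi$ near the diagonal afforded by $H>1/2$; everything else is routine once the boundedness reduction is in place.
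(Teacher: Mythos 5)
The paper offers no proof of this proposition: it is imported verbatim from Hu's memoir (Theorem 10.3 of \cite{Hu3}), so there is no internal argument to measure yours against. Your proof is, in outline, the standard derivation of the It\^{o} formula for divergence-type fractional integrals with $H>1/2$ (essentially the route taken in the cited source and in Al\`os--Nualart-type arguments), and it is correct. You place the key idea where it belongs: since $\sum_i\dbE(\Delta_iX)^2=O(|\pi|^{2H-1})\to 0$, the genuine second-order Taylor term dies, and the correction $\tfrac12\int_0^t\partial_{xx}F(s,X_s)\frac{d}{ds}\|f\|_s^2\,ds$ must instead be produced by the trace term $\langle D^H\partial_xF(t_i,X_{t_i}),f\mathbf{1}_{[t_i,t_{i+1}]}\rangle_T$ in the identity $\Phi\,\delta(v)=\delta(\Phi v)+\langle D^H\Phi,v\rangle_T$; your bookkeeping identity $\|f\|_{t_{i+1}}^2-\|f\|_{t_i}^2=2\int_0^{t_i}\int_{t_i}^{t_{i+1}}\phi(u-v)f_uf_v\,du\,dv+\|f\mathbf{1}_{[t_i,t_{i+1}]}\|_T^2$ and the disposal of the last summand as $O(|\pi|^{2H-1})$ are exactly right. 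The one step left at the level of a claim is the convergence $u^\pi f\to\partial_xF(\cdot,X_\cdot)f$ in $\mathbb{L}^{1,2}_H$, but you name the correct ingredients (chain rule $D^H_rX_s=f_r\mathbf{1}_{[0,s]}(r)$ for deterministic $f$, boundedness and uniform continuity of the derivatives after truncation, path continuity of $X$, dominated convergence), and since $\dbE|\delta(F)|^2$ is dominated by the $\mathbb{L}^{1,2}_H$-norm through Proposition \ref{2} and Cauchy--Schwarz, this does close the argument; the localization via the local property of the divergence operator is likewise standard. I see no gap.
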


\begin{proposition}[Hu \cite{Hu3}, Theorem 11.1]\label{3}
 For $i = 1, 2$, let $g_{i}$ and $f_{i}$ be two real valued processes satisfying
 $\dbE \int_0^T (|g_{i}(s)|^{2} + |f_{i}(s)|^{2}) ds < \infty$.
Moreover, assume that $D^{H}_{t}f_{i}(s)$ is continuously differentiable in its arguments
 $(s,t)\in [0,T]^{2}$ for almost every $\omega \in \Omega$, and
$\dbE \int_0^T \int_0^T |\mathbb{D}_{t}^{H}f_{i}(s)|^{2} dsdt < \infty$.
Denote
\begin{equation*}
  X_{i}(t) = \int_0^t g_{i}(s) ds + \int_0^t f_{i}(s) dB_{s}^{H}, \qq t\in [0,T]. \qq\ \ \ \ \
\end{equation*}
Then
\begin{equation*}
\begin{split}
   X_{1}(t)X_{2}(t) =& \int_0^t X_{1}(s)g_{2}(s) ds + \int_0^t X_{1}(s)f_{2}(s) dB_{s}^{H}
                      +\int_0^t X_{2}(s)g_{1}(s) ds \\
                    &+ \int_0^t X_{2}(s)f_{1}(s) dB_{s}^{H} +\int_0^t \mathbb{D}_{s}^{H}X_{1}(s)f_{2}(s) ds + \int_0^t \mathbb{D}_{s}^{H}X_{2}(s)f_{1}(s) ds.
\end{split}
\end{equation*}
\end{proposition}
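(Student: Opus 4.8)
The plan is to prove the identity first for a dense class of elementary integrands and then pass to the limit; one may fix $t=T$ and assume $X_1(0)=X_2(0)=0$, the general $t$ following by replacing $f_i,g_i$ with $f_i\mathbf{1}_{[0,t]},g_i\mathbf{1}_{[0,t]}$. The only computational input is the divergence product rule
\[
F\int_0^T v_s\,dB_s^H=\int_0^T Fv_s\,dB_s^H+\int_0^T (\mathbb{D}_s^H F)\,v_s\,ds ,
\]
valid for $F\in\mathbb{D}^{1,2}$ and $v$ regular enough that both sides are defined; it follows from the duality defining $\delta$ by pairing against an arbitrary $\Phi\in\mathcal{P}_T$, using the Leibniz rule for $D^H$ together with the identity $\langle D^H F,v\rangle_T=\int_0^T(\mathbb{D}_s^HF)v_s\,ds$, which is exactly how $\mathbb{D}^H$ and the $\phi$-kernel in \eqref{11} are set up.

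For elementary $f_i,g_i$ (step processes with smooth $\mathbb{D}^{1,2}$-valued coefficients, so that $s\mapsto\mathbb{D}_s^HX_i(s)$ is continuous and every process appearing below lies in $\mathbb{L}^{1,2}_H$), fix a partition $0=t_0<\cdots<t_n=T$ refining the steps and telescope
\[
X_1(T)X_2(T)=\sum_{k}X_1(t_{k+1})\big(X_2(t_{k+1})-X_2(t_k)\big)+\sum_{k}\big(X_1(t_{k+1})-X_1(t_k)\big)X_2(t_k).
\]
Writing each increment as $\int_{t_k}^{t_{k+1}}g_i\,ds+\int_{t_k}^{t_{k+1}}f_i\,dB^H_s$, pulling the constants $X_1(t_{k+1}),X_2(t_k)$ inside the Lebesgue pieces, and applying the product rule to the two stochastic pieces (with $F=X_1(t_{k+1})$, $v=f_2\mathbf{1}_{(t_k,t_{k+1}]}$, resp.\ $F=X_2(t_k)$, $v=f_1\mathbf{1}_{(t_k,t_{k+1}]}$), one obtains six sums: Riemann sums for $X_1g_2$ and $X_2g_1$; two divergence sums $\int_0^T\big(\sum_kX_1(t_{k+1})\mathbf{1}_{(t_k,t_{k+1}]}\big)f_2\,dB^H_s$ and its analogue with $X_2(t_k)$; and two ``trace'' sums $\sum_k\int_{t_k}^{t_{k+1}}\mathbb{D}_s^H[X_1(t_{k+1})]f_2(s)\,ds$ and $\sum_k\int_{t_k}^{t_{k+1}}\mathbb{D}_s^H[X_2(t_k)]f_1(s)\,ds$.

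Now let the mesh tend to $0$. The Riemann sums converge pathwise (and in $L^2$) to $\int_0^TX_1g_2\,ds$ and $\int_0^TX_2g_1\,ds$ by continuity of the $X_i$. The divergence sums converge in $L^2$ to $\int_0^TX_1f_2\,dB^H_s$ and $\int_0^TX_2f_1\,dB^H_s$, once one checks that the step processes $\sum_kX_1(t_{k+1})\mathbf{1}_{(t_k,t_{k+1}]}f_2$ and $\sum_kX_2(t_k)\mathbf{1}_{(t_k,t_{k+1}]}f_1$ converge to $X_1f_2$, resp.\ $X_2f_1$, in $\mathbb{L}^{1,2}_H$ and applies the isometry of Proposition~\ref{2} to the difference --- it is here that the hypotheses on $D_t^Hf_i$ and the bound on $\mathbb{D}_t^Hf_i(s)$ are used, to control $\mathbb{D}^HX_i$. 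Finally, in the trace sums write $\mathbb{D}_s^H[X_i(t_{k+1})]=\mathbb{D}_s^H[X_i(s)]+\mathbb{D}_s^H[X_i(t_{k+1})-X_i(s)]$ on $(t_k,t_{k+1}]$; the first pieces assemble into $\int_0^T\mathbb{D}_s^HX_1(s)f_2(s)\,ds$ and $\int_0^T\mathbb{D}_s^HX_2(s)f_1(s)\,ds$, while the second pieces vanish, their $L^1(ds\,d\dbP)$-norms being controlled by the modulus of continuity of the increments $X_i(t_{k+1})-X_i(s)$ in $\mathbb{D}^{1,2}$ times the $L^2$-norms of $f_j$. Collecting the six limits yields the formula for elementary integrands, and a density argument in the norms $\dbE\int_0^T(|g_i|^2+|f_i|^2)\,ds$ and $\dbE\int_0^T\int_0^T|\mathbb{D}_t^Hf_i(s)|^2\,ds\,dt$, under which all six terms on the right are continuous (again by Proposition~\ref{2}), extends it to the general case after extracting an a.s.\ convergent subsequence on the left.

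The main obstacle is this limiting step and, inside it, the trace terms: because the integral is anticipating one must show that the discretized traces $\mathbb{D}_s^H[X_i(t_{k+1})]$ converge to the \emph{diagonal} value $\mathbb{D}_s^HX_i(s)$ rather than to a time-shifted quantity, and this is precisely what the regularity hypotheses on the Malliavin derivatives of the $f_i$ are there to guarantee. A convenient simplification is to treat first the case $g_i\equiv0$, where each $X_i$ is a pure divergence integral, and then add back the absolutely continuous drifts, which contributes only ordinary Lebesgue terms and is routine.
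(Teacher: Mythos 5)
The paper does not actually prove Proposition \ref{3}: it is imported verbatim from Hu's memoir (Theorem 11.1 there), so there is no internal argument to compare yours against. Your strategy --- the duality-based product rule $F\,\delta(v)=\delta(Fv)+\int_0^T(\mathbb{D}_s^HF)v_s\,ds$, a forward--backward telescoping of $X_1(T)X_2(T)$ over a partition, and passage to the limit in the six resulting sums --- is the standard route to such product formulas in the anticipating calculus, and the outline is sound; in particular, isolating the convergence of the discretized traces to the \emph{diagonal} values $\mathbb{D}_s^HX_i(s)$ as the crux of the proof is exactly right.

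The one step whose justification does not hold up as written is the estimate for the off-diagonal trace pieces. You control $\sum_k\int_{t_k}^{t_{k+1}}\mathbb{D}_s^H\big[X_i(t_{k+1})-X_i(s)\big]f_j(s)\,ds$ by ``the modulus of continuity of the increments in $\mathbb{D}^{1,2}$,'' but for $H>1/2$ the evaluation $G\mapsto\mathbb{D}_s^HG=\int_0^T\phi(s-u)D_u^HG\,du$ is \emph{not} bounded by the $\mathbb{D}^{1,2}$-norm: that norm only sees $\|D^HG\|_T$ in the $\mathcal{H}$-metric of \rf{11}, and the formal pairing with $\delta_s$ is unbounded there since $\|\delta_s\|_T^2=\phi(0)=+\infty$. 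Moreover $D_u^H\big[X_i(t_{k+1})-X_i(s)\big]$ contains the term $f_i(u)\mathbf{1}_{(s,t_{k+1}]}(u)$, which is not small in any $L^p(du)$-norm uniformly over the partition. The conclusion is nevertheless correct for your elementary class, but for a different reason: compute $D_u^H$ of the increment explicitly and use $\int_s^{t_{k+1}}\phi(s-u)\,du=H\,(t_{k+1}-s)^{2H-1}=O\big(\max_k(t_{k+1}-t_k)^{2H-1}\big)\to0$ (this is where $H>1/2$ enters), together with the boundedness of $D_u^Hf_i(r)$ and $D_u^Hg_i(r)$ for step processes with smooth coefficients. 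With that substitution the discretization argument closes, and the final density step is legitimate because, for $H>1/2$, $L^2([0,T])$ embeds continuously into $\mathcal{H}$, so the $L^2$-continuity of the six right-hand terms in the stated norms does follow from Proposition \ref{2}.
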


\begin{proposition}[Wen and Shi \cite{Wen}, Lemma 3.1]\label{WS}
Suppose $g$ is a given differentiable function with polynomial growth and $f$ is a $C^{0,1}_{pol}$-continuous function.
Then BSDE
\begin{equation*}
    Y_{t}=g(\eta_{T}) + \int_t^T f(s,\eta_{s}) ds - \int_t^T Z_{s} dB_{s}^{H}\qq\qq\qq \
\end{equation*}
admits a unique solution
 $(Y_{\cdot},Z_{\cdot}) \in \widetilde{\mathcal{V}}_{[0,T]} \times \widetilde{\mathcal{V}}^{H}_{[0,T]}$ (see \rf{3.99} for the definition of these spaces).
Moreover, the following estimate holds,
\begin{equation}\label{34}
\begin{split}
    & {\dbE}\left(e^{\beta t}|Y_{t}|^{2} + \frac{\beta}{2}\int_t^T e^{\beta s}|Y_{s}|^{2} ds + \frac{2}{M}\int_t^T s^{2H-1}e^{\beta s}|Z_{s}|^{2} ds\right)\\
\leq& {\dbE}\left(e^{\beta T}|g(\eta_{T})|^{2} + \frac{2}{\beta}\int_t^T e^{\beta s}|f(s,\eta_{s})|^{2} ds \right).
\end{split}
\end{equation}
where $M > 0$ is a suitable constant and $\beta > 0$.
\end{proposition}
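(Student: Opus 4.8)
The plan is to exploit the fact that the generator $f(s,\eta_{s})$ of this equation does not depend on $(Y,Z)$, so that the equation can be solved \emph{explicitly} by passing through a deterministic parabolic PDE and then composing its solution with the forward process $\eta$. Write $\mathcal{L}_{s}$ for the (time-degenerate) second-order operator and $\sigma$ for the diffusion coefficient that appear when the fractional It\^o formula of Proposition~\ref{4} is applied to $\eta$, so that $F(t,\eta_{t})=F(0,\eta_{0})+\int_{0}^{t}(\partial_{s}+\mathcal{L}_{s})F(s,\eta_{s})\,ds+\int_{0}^{t}\partial_{x}F(s,\eta_{s})\,\sigma_{s}\,dB_{s}^{H}$, the second-order part of $\mathcal{L}_{s}$ carrying the coefficient $\tfrac12\frac{d}{ds}\|\sigma\|_{s}^{2}\sim s^{2H-1}$. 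First I would consider the backward PDE
\[
\partial_{s}u(s,x)+\mathcal{L}_{s}u(s,x)+f(s,x)=0,\qquad u(T,x)=g(x),
\]
and show, via a heat-kernel (Feynman--Kac) representation together with the polynomial growth of $g$, $f$ and $\partial_{x}f$, that it admits a classical solution $u\in C^{1,2}([0,T]\times\mathbb{R})$ for which $u$, $\partial_{x}u$ and $\partial_{x}^{2}u$ all grow at most polynomially in $x$, locally uniformly in $s$.

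\medskip

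\noindent\textbf{Existence.} I would then set $Y_{t}:=u(t,\eta_{t})$ and $Z_{t}:=\sigma_{t}\,\partial_{x}u(t,\eta_{t})$ and apply Proposition~\ref{4} to $u(\cdot,\eta_{\cdot})$: by the PDE the $ds$-terms collapse to $-\int_{t}^{T}f(s,\eta_{s})\,ds$, giving exactly
\[
Y_{t}=g(\eta_{T})+\int_{t}^{T}f(s,\eta_{s})\,ds-\int_{t}^{T}Z_{s}\,dB_{s}^{H}.
\]
Membership of $(Y,Z)$ in $\widetilde{\mathcal{V}}_{[0,T]}\times\widetilde{\mathcal{V}}^{H}_{[0,T]}$ then follows from the continuity and polynomial growth of $u$ and its derivatives together with the (Gaussian) moment bounds for $\eta$; the Malliavin-regularity requirements encoded in $\widetilde{\mathcal{V}}^{H}_{[0,T]}$ come from differentiating $Z_{t}=\sigma_{t}\partial_{x}u(t,\eta_{t})$ and invoking the smoothness of $u$ and of the Malliavin derivative of $\eta$.

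\medskip

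\noindent\textbf{A priori estimate and uniqueness.} For \rf{34}, I would apply Proposition~\ref{4} to the function $F(s,x)=e^{\beta s}u(s,x)^{2}$ composed with $\eta$. Using the PDE, the cross term $2e^{\beta s}u\,(\partial_{s}+\mathcal{L}_{s})u$ becomes $-2e^{\beta s}u\,f(s,\eta_{s})$, and the only contribution left from the second-order part of $\mathcal{L}_{s}$ is the nonnegative term $\tfrac12\frac{d}{ds}\|\sigma\|_{s}^{2}\cdot 2e^{\beta s}(\partial_{x}u)^{2}\ge\frac{2}{M}\,s^{2H-1}e^{\beta s}|Z_{s}|^{2}$, where $M>0$ absorbs the constant in $\frac{d}{ds}\|\sigma\|_{s}^{2}\sim s^{2H-1}$ together with the diffusion coefficient of $\eta$. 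Taking expectations annihilates the $dB^{H}$-integral by Proposition~\ref{2}, and splitting $2e^{\beta s}Y_{s}f(s,\eta_{s})\le\frac{\beta}{2}e^{\beta s}|Y_{s}|^{2}+\frac{2}{\beta}e^{\beta s}|f(s,\eta_{s})|^{2}$ lets the $|Y|^{2}$-part be absorbed on the left, which yields precisely \rf{34}. For uniqueness, the difference $(\widehat{Y},\widehat{Z})$ of two solutions solves the same equation with zero terminal value and zero generator; applying the product formula of Proposition~\ref{3} to $|\widehat{Y}_{s}|^{2}$, taking expectations, using that $\widehat{Y}_{0}$ is deterministic, and invoking the lower bound $\mathbb{D}_{s}^{H}\widehat{Y}_{s}\,\widehat{Z}_{s}\ge c\,s^{2H-1}|\widehat{Z}_{s}|^{2}$ — a structural feature of $\widetilde{\mathcal{V}}^{H}_{[0,T]}$, equivalently of the fact that admissible $Z$'s arise from the Malliavin derivative of $Y$ — forces $\widehat{Y}\equiv 0$ and $\widehat{Z}\equiv 0$.

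\medskip

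\noindent\textbf{Main obstacle.} The genuinely technical step is the construction of the classical solution $u\in C^{1,2}$ of the \emph{degenerate} backward PDE with quantitative polynomial-growth control on $u,\partial_{x}u,\partial_{x}^{2}u$ out of the weak hypotheses on $g$ and $f$ — this needs heat-kernel smoothing and some care near $s=T$ and near the degeneracy at $s=0$ — together with the verification of the $\mathbb{L}^{1,2}_{H}$-type integrability conditions that legitimize the use of Propositions~\ref{2}--\ref{4}, in particular that $e^{\beta s}Y_{s}Z_{s}$ lies in the class for which the divergence integral has zero mean. Once $u$ is available, the remaining It\^o-type bookkeeping is routine.
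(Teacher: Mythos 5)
Your proposal is sound, and it follows essentially the canonical route: the paper itself imports this statement without proof from Wen and Shi \cite{Wen} (Lemma 3.1), and the argument there (going back to Hu and Peng \cite{Hu} and Maticiuc and Nie \cite{Maticiuc}) is exactly your construction --- solve the associated degenerate parabolic PDE $\partial_s u + b_s\partial_x u + \tfrac12\tfrac{d}{ds}\|\sigma\|_s^2\,\partial_x^2 u + f = 0$ via the Gaussian kernel in the time variable $\|\sigma\|_s^2$, set $Y_t=u(t,\eta_t)$, $Z_t=\sigma_t\partial_x u(t,\eta_t)$, and derive \eqref{34} from the It\^o/product formulas together with the identity $\mathbb{D}_s^H Y_s=\tfrac{\hat\sigma_s}{\sigma_s}Z_s$ and the two-sided bound $\tfrac{t^{2H-1}}{M}\le\tfrac{\hat\sigma_t}{\sigma_t}\le Mt^{2H-1}$. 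Your bookkeeping of the constants $\tfrac{\beta}{2}$ and $\tfrac{2}{M}$ is consistent with \eqref{34}, and you correctly identify the genuinely technical points (regularity of $u$ near the degeneracy and the $\mathbb{L}^{1,2}_H$-integrability needed to kill the divergence integral in expectation).
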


\subsection{Differentiability of Functions of Measures}

We recall now some definitions related to the differentiability with respect to functions of measures that we will need in Section 5. Let $\mathcal{P}(\R)$ be the space of all probability measures on $(\R, \mathcal{B}(\R))$. We denote by $\mathcal{P}_p(\R)$ the subspace of  $\mathcal{P}(\R)$ of order $p$, which means that $\mathcal{P}_p(\R)\triangleq\{m\in\mathcal{P}(\R):\int_{\R} |x|^p m(dx)<+\infty\}$. The notion of differentiability for functions of measures that we will use in the paper is inspired from the notes of Cardaliaguet \cite{Cardaliaguet} and the work of Carmona and Delarue \cite{carmona2}. It's based on the \textit{lifting} of functions $m \in \mathcal{P}_2(\R) \mapsto \sigma(m)$ into functions ${\xi}^{\prime} \in L^{2}({\Omega}; \mathbb{R}) \mapsto {\sigma}^{\prime}({\xi}^{\prime})$, over some probability space $({\Omega},\mathcal{F},\mathbb{P})$, by setting ${\sigma}^{\prime}
({\xi}^{\prime}) \triangleq \sigma({\mathbb{P}}_{{\xi}^{\prime}})$.
\begin{definition}
	A function $\sigma$ is said to be differentiable at $ m_{0} \in \mathcal{P}_2(\R)$, if there exists a random variable ${\xi}^{\prime}_{0}\in L^2({\Omega},\mathcal{F},{\PP})$ over some probability space $({\Omega},\mathcal{F},{\PP})$ with ${\PP}_{{\xi}^{\prime}_{0}}= m_{0}$ such that ${\sigma}^{\prime}: L^2({\Omega},\mathcal{F},{\PP}) \to \R$ is Fr\'echet differentiable at ${\xi}^{\prime}_{0}$.
\end{definition}

We suppose for simplicity that ${\sigma}^{\prime}: L^2({\Omega},\mathcal{F},\PP) \to \R$ is Fr\'echet differentiable. We denote its Fr\'echet derivative at ${\xi}^{\prime}_{0}$ by $D{\sigma}^{\prime}({\xi}^{\prime}_{0})$. Recall that $D{\sigma}^{\prime}({\xi}^{\prime}_{0}):L^2({\Omega},\mathcal{F}, \PP) \to \R$ is a continuous linear mapping; i.e. $D{\sigma}^{\prime}({\xi}^{\prime}_{0})\in L( L^2({\Omega},\mathcal{F}, \PP),\R)$.
With the identification that $L(L^2({\Omega},\mathcal{F}, \PP),\R) \equiv L^2({\Omega},\mathcal{F},\PP)$ given by Riesz representation theorem, $D{\sigma}^{\prime}({\xi}^{\prime}_{0})$ is viewed as an element of
$L^2({\Omega},\mathcal{F}, \PP)$, hence we can write
$$
\sigma(m)-\sigma(m_{0})= {\sigma}^{\prime}({\xi}^{\prime})-{\sigma}^{\prime}({\xi}^{\prime}_{0})=
{\mathbb{E}}[(D{\sigma}^{\prime})({{\xi}^{\prime}}_{0})\cd({\xi}^{\prime}-{\xi}^{\prime}_{0})] +o({\mathbb{E}}[|{\xi}^{\prime}-{\xi}^{\prime}_{0}|^{2}]^{1/2}),\  \textrm{as}\  {\mathbb{E}}[|{\xi}^{\prime}-{\xi}^{\prime}_{0}|^{2}]^{1/2} \to 0.
$$
where ${\xi}^{\prime}$ is a random variable with law $m$. Moreover, according to Cardaliaguet \cite{Cardaliaguet}, there exists a Borel function $h_{m_{0}}:\R\to\R$, such that $D{\sigma}^{\prime}({\xi}^{\prime}_{0}) =h_{m_{0}}({\xi}^{\prime}_{0})$, ${\PP}$-a.s. We define the derivative of $\sigma$ with respect to the measure at $m_{0}$ by putting $\partial_m\sigma(m_{0})(x) : =h_{m_{0}}(x)$. Notice that $\partial_m\sigma(m_{0})(x)$ is defined $m_{0}(d x)$-a.e. uniquely. Therefore, the following differentiation formula is invariant by modification of the space ${\Omega}$ where the random variables ${\xi}^{\prime}_{0}$ and ${\xi}^{\prime}$ are defined, i.e.
$$
\sigma(m)-\sigma(m_{0})= {\mathbb{E}}[
\partial_m\sigma(m_{0})({\xi}^{\prime}_{0})\cd({\xi}^{\prime}- {\xi}^{\prime}_{0})]+o({\mathbb{E}}[|{\xi}^{\prime}- {\xi}^{\prime}_{0}|^{2}]^{1/2}),\  \textrm{as}\ {\mathbb{E}}[|{\xi}^{\prime}- {\xi}^{\prime}_{0}|^{2}]^{1/2} \to 0.
$$
whenever ${\xi}^{\prime}$ and ${\xi}^{\prime}_{0}$ are random variables with laws $m$ and $m_{0}$ respectively. \\
\\
\textbf{ Joint concavity:} We will need the joint concavity of a function on $(\R\times \mathcal{P}_2(\R))$. A differentiable function $b$ defined on  $(\R \times \mathcal{P}_2(\R))$ is concave, if for every   $(x^\prime,m^\prime)$ and $(x,m) \in  (\R \times \mathcal{P}_2(\R))$, we have
\begin{equation*}
\label{concavity}
\begin{aligned}
b(x^{\prime},m^{\prime})-b(x,m)- \partial_x b(x,m) (x^\prime-x) -{{\E}}[\partial_m b(x,m)({X})({X}^\prime- {X})] \leq 0,
\end{aligned}
\end{equation*}
whenever ${X}, {X}^\prime\in L^2({\Omega},\mathcal{F},{\PP};\R)$ with laws $m$ and $m^{\prime}$ respectively.
\section{Well-posedness}

The existence and uniqueness of mean-field anticipated BSDEs driven by fBm are proved here by using two different
approaches. For simplify the presentation, we only discuss the one dimensional case in this paper. Let
\begin{equation*}
\eta_{t} = \eta_{0} + \int_0^t b_{s} ds + \int_0^t \sigma_{s} dB_{s}^{H},
\end{equation*}
where $\eta_{0}$ is a constant, and $b$ and $\sigma$ are two deterministic differentiable functions such that $\sigma_{t} \neq 0$ (then either $\sigma_{t} < 0$ or $\sigma_{t} > 0$),  $t\in [0,T]$.
We recall that (see (\ref{11}))
\begin{equation*}
  \| \sigma \|_{t}^{2} = H(2H-1) \int_0^t \int_0^t |u-v|^{2H-2} \sigma_{u} \sigma_{v} dudv.
\end{equation*}
So $\frac{d}{dt}(\| \sigma \|_{t}^{2})= 2 \hat{\sigma}_{t} \sigma_{t}>0$ for $t\in (0,T]$, where
$\hat{\sigma}_{t} = \int_0^t \phi(t-v) \sigma_{v} dv$.

\ms

Now, we denote the (non-completed) product space of $(\Omega,\mathcal{F},\dbP)$ by $(\bar{\Omega},\bar{\mathcal{F}}, \bar{\dbP})$ $ = (\Omega\times \Omega,\mathcal{F}\otimes \mathcal{F}, \dbP\otimes \dbP)$,
and denote the filtration of this product space by $\bar{\mathbb{F}} = \{ \bar{\mathcal{F}}_{t} = \mathcal{F} \otimes \mathcal{F}_{t}, 0\leq t\leq T \}$.
A random variable, originally defined on $\Omega$,
$\xi\in L^{0}(\Omega,\mathcal{F}, \dbP;\mathbb{R})$ is canonically extended to $\bar{\Omega}$:
$\xi'(\omega',\omega)=\xi(\omega'), \ (\omega',\omega)\in \bar{\Omega} = \Omega\times \Omega$.
On the other hand, for every $\theta\in L^{1}(\bar{\Omega},\bar{\mathcal{F}}, \bar{\dbP})$, the random variable
$\theta(\cdot,\omega):\Omega\rightarrow \mathbb{R}$ is in $L^{1}(\Omega,\mathcal{F}, \dbP), \  \dbP(d \omega), \ a.s.$,
and its expectation is denoted by
\begin{equation*}
  \dbE'[\theta(\cdot,\omega)] = \int_{\Omega} \theta(\omega',\omega) \dbP(d \omega').
\end{equation*}
Then we have $\dbE'[\theta]=\dbE'[\theta(\cdot,\omega)]\in L^{1}(\Omega,\mathcal{F}, \dbP)$. In addition,
\begin{equation*}
 \bar{\dbE}[\theta]\bigg(= \int_{\bar{\Omega}} \theta d \bar{\dbP}
 = \int_{\Omega}  \dbE'[\theta(\cdot,\omega)] \dbP(d \omega) \bigg) = \dbE\big[\dbE'[\theta]\big].
\end{equation*}

In the following, we investigate the existence and uniqueness of BSDE \rf{0}. And for simplicity of presentation, we rewrite BSDE \rf{0} into a differential form,
\begin{equation}\label{31}
  \begin{cases}
    -dY_{t}= \dbE'[f(t,\eta_{t},Y'_{t},Z'_{t},Y_{t},Z_{t},Y'_{t+\delta(t)},Z'_{t+\zeta(t)},Y_{t+\delta(t)},Z_{t+\zeta(t)})] dt
             - Z_{t} dB_{t}^{H}, \ \ \ t\in [0,T]; \\
      Y_{t}=g(\eta_{t}), \ \  Z_{t}=h(\eta_{t}), \ \ \ t\in[T,T+K],
  \end{cases}
\end{equation}
where $K\geq 0$ is a constant, $\delta(\cdot)$ and $\zeta(\cdot)$ are two deterministic $\mathbb{R}^{+}$-valued continuous functions defined on $[0,T]$ satisfying the following two issues:
\begin{itemize}
  \item [(i)] For all $t\in[0,T]$,
    \begin{equation*}
      t + \delta(t) \leq T+K, \qq t + \zeta(t) \leq T+K.
    \end{equation*}
  \item [(ii)] There exists a constant $L \geq 0$ such that  for all nonnegative and integrable $m(\cdot)$,
   \begin{equation*}
     \int_t^T m(s + \delta(s)) ds \leq L \int_t^{T+K} m(s) ds, \ \ \
     \int_t^T m(s + \zeta(s)) ds \leq L \int_t^{T+K} m(s) ds,  \ \ \ t \in [0,T].
   \end{equation*}
\end{itemize}

\begin{remark} \label{Rem}
Owing to our notation, we mark that the coefficient of Eq. (\ref{31}) is explained by:
\begin{equation*}
\begin{split}
     &\dbE'[f(t,\eta_{t},Y'_{t},Z'_{t},Y_{t},Z_{t},Y'_{t+\delta(t)},Z'_{t+\zeta(t)},Y_{t+\delta(t)},Z_{t+\zeta(t)})](\omega)\\
   =& \dbE'[f(t,\eta_{t}(\omega),Y'_{t},Z'_{t},Y_{t}(\omega),Z_{t}(\omega),
     Y'_{t+\delta(t)},Z'_{t+\zeta(t)},Y_{t+\delta(t)}(\omega),Z_{t+\zeta(t)}(\omega))]\\
   =& \int_{\Omega} f(t,\eta_{t}(\omega),Y_{t}(\omega'),Z_{t}(\omega'),Y_{t}(\omega),Z_{t}(\omega),
     Y_{t+\delta(t)}(\omega'),Z_{t+\zeta(t)}(\omega'),Y_{t+\delta(t)}(\omega),Z_{t+\zeta(t)}(\omega)) \dbP (d\omega').
\end{split}
\end{equation*}
\end{remark}
From the above remark, combining the definition of expectation, we have the following two special cases:
\bel{34.2}\ba{ll}
\ds \dbE'[f(t,Y'_{t},Z'_{t},Y'_{t+\delta(t)},Z'_{t+\zeta(t)})]
 =\dbE[f(t,Y_{t},Z_{t},Y_{t+\delta(t)},Z_{t+\zeta(t)})], \\
\ns\ds \dbE'[f(t,\eta_{t},Y_{t},Z_{t},Y_{t+\delta(t)},Z_{t+\zeta(t)})]
 =f(t,\eta_{t},Y_{t},Z_{t},Y_{t+\delta(t)},Z_{t+\zeta(t)}).
\ea\ee
Before giving the definition of  solutions of BSDE (\ref{31}),
we introduce the following sets,

\begin{itemize}
  \item [$\bullet$] $L^{2}(\mathcal{F}_{r};\mathbb{R}) =\Big\{\xi:\Omega\rightarrow \mathbb{R} \big| \xi$
        is $\mathcal{F}_{r}$-measurable, $\dbE[|\xi|^{2}]< \infty\Big\}$;
  \item [$\bullet$]
   $C_{pol}^{1,3}([0,T]\times \mathbb{R})=\Big\{ \varphi\in C^{1,3}([0, T] \times \mathbb{R}),$  and
     all derivatives  of   $\varphi$ are of polynomial  growth$\Big\}$;
  \item [$\bullet$]
  $\mathcal{V}_{[0,T]} = \Big\{ Y=\varphi\big(\cdot,\eta(\cdot)\big) \big| \varphi\in C_{pol}^{1,3}([0,T]\times \mathbb{R})$  with $\frac{\partial \varphi}{\partial t}\in C_{pol}^{0,1}([0,T]\times \mathbb{R}), \ t\in[0,T] \Big\}.$
\end{itemize}
 Moreover, by $\widetilde{\mathcal{V}}_{[0,T+K]}$ and $\widetilde{\mathcal{V}}_{[0,T+K]}^{H}$ we denote the completion of $\mathcal{V}_{[0,T+K]}$ under the following norms respectively,
\begin{equation}\label{3.99}
  \| Y \| \triangleq \bigg(\dbE\int_0^{T+K}  e^{\beta t}  |Y(t)|^{2} dt\bigg)^{\frac{1}{2}}, \ \ \ \
  \| Z \| \triangleq \bigg(\dbE\int_0^{T+K} t^{2H-1} e^{\beta t} |Z(t)|^{2} dt\bigg)^{\frac{1}{2}},
\end{equation}
where $\beta\geq 0$ is a constant.
It is easy to see that  $\widetilde{\mathcal{V}}_{[0,T+K]}^{H} \subseteq \widetilde{\mathcal{V}}_{[0,T+K]} \subseteq L^{2}_{\mathcal{F}}(0,T+K;\mathbb{R})$.

\begin{definition}
We call $(Y, Z)$ a solution of BSDE (\ref{31}), if they  belong to
$\widetilde{\mathcal{V}}_{[0,T+K]}\times \widetilde{\mathcal{V}}^{H}_{[0,T+K]}$ and satisfy the equation (\ref{31}).
\end{definition}

The setting of our problem is as follows: to find a pair of processes
 $(Y_{\cdot},Z_{\cdot}) \in \widetilde{\mathcal{V}}_{[0,T+K]} \times \widetilde{\mathcal{V}}^{H}_{[0,T+K]}$ satisfying the BSDE (\ref{31}). In the following, we will use two different approaches to prove the existence and uniqueness of the equation (\ref{31}).

\subsection{The first approach}

In this subsection, the first approach, introduced by Maticiuc and Nie \cite{Maticiuc}, is used to establish the existence and uniqueness of  Eq. (\ref{31}).
In order to find the solution of BSDE (\ref{31}), the following assumptions are needed.
\begin{itemize}
\item[(H1)] $g$ and $h$ are given elements in $ C^{2}_{pol}(\mathbb{R})$ such that
\begin{equation*}
\mathbb{E}\int_T^{T+K} e^{\beta t}|g(\eta_{t})|^{2} dt< + \infty,\qq
  \mathbb{E}\int_T^{T+K} e^{\beta t}t^{2H-1}|h(\eta_{t})|^{2} dt< +\infty.
\end{equation*}

\end{itemize}

\begin{itemize}
\item[(H2)]
Assume that $f=f(t,x,y',z',y,z,\theta',\zeta',\theta,\zeta):[0,T]\times \mathbb{R}^{5}\times L^{2}(\mathcal{F}_{r'},\mathbb{R}) \times L^{2}(\mathcal{F}_{r},\mathbb{R})\times L^{2}(\mathcal{F}_{r'},\mathbb{R}) \times L^{2}(\mathcal{F}_{r},\mathbb{R})\longrightarrow L^{2}(\mathcal{F}_{t},\mathbb{R})$ is a $C_{pol}^{0,1}$-continuous function, where $r', r \in[t,T+K]$. Moreover, there is a constant $C\geq 0$ such that,
for every $t\in [0,T]$, $x,y,\bar{y},$ $z,\bar{z},y',\bar{y}',z',\bar{z}' \in \mathbb{R}$,
$\theta_{\cdot},\bar{\theta}_{\cdot},\theta'_{\cdot},\bar{\theta}'_{\cdot}$,
$\zeta_{\cdot},\bar{\zeta}_{\cdot},\zeta'_{\cdot},\bar{\zeta}'_{\cdot}\in L_{\mathcal{F}}^2(t,T+K;\mathbb{R})$,
we have
$$\ba{ll}
\ds |f(t,x,y',z',y,z,\theta'_{r'},\zeta'_{r},\theta_{r'},\zeta_{r})- f(t,x,\bar{y}',\bar{z}',\bar{y},\bar{z},\bar{\theta}'_{r'},\bar{\zeta}'_{r},\bar{\theta}_{r'},\bar{\zeta}_{r})|\\
  \leq C\bigg(|y'-\bar{y}'| +|z'-\bar{z}'| + |y-\bar{y}| +|z-\bar{z}| \\
\ns\ds\qq \ \ + \dbE'\bigg[  |\theta'_{r'}-\bar{\theta}'_{r'}| +|\zeta'_{r}-\bar{\zeta}'_{r}| \bigg| \mathcal{F}_{t}\bigg]
+ \dbE\bigg[ |\theta_{r'}-\bar{\theta}_{r'}| +|\zeta_{r}-\bar{\zeta}_{r}| \bigg| \mathcal{F}_{t} \bigg] \bigg).
\ea$$
\end{itemize}
For notational simplicity, we denote $f_{0}(t,x)=f_{0}(t,x,0,0,0,0,0,0,0,0)$.
\begin{theorem}\label{FirstMethod}
Under the assumptions (H1) and (H2),  BSDE (\ref{31}) admits a unique solution
$(Y_{\cdot},Z_{\cdot})\in\widetilde{\mathcal{V}}_{[0,T+K]} \times \widetilde{\mathcal{V}}_{[0,T+K]}^{H}$.
Moreover, for all $t\in[0,T]$,
\begin{equation}\label{20}
      \dbE\left( e^{\beta t}|Y_{t}|^{2} + \int_t^T e^{\beta s}s^{2H-1}|Z_{s}|^{2} ds\right)
 \leq R\Theta(t,T,K),
\end{equation}
where $R$ is a positive constant which may be different from line to line, and
\begin{equation*}
  \Theta(t,T,K)= \dbE\bigg(e^{\beta T}|g(\eta_{T})|^{2}  + \int_t^T e^{\beta s}|f_{0}(s,\eta_{s})|^{2} ds
                + \int_T^{T+K} e^{\beta s}\big(|g(\eta_{s})|^{2}+s^{2H-1}|h(\eta_{s})|^{2}\big) ds \bigg).
\end{equation*}
\end{theorem}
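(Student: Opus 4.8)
The plan is to prove existence and uniqueness for the anticipated mean-field fractional BSDE \eqref{31} by a fixed-point argument in the Banach space $\widetilde{\mathcal{V}}_{[0,T+K]} \times \widetilde{\mathcal{V}}_{[0,T+K]}^{H}$, following the scheme of Maticiuc and Nie. First I would set up the map: given $(U,V)$ in this space (extended on $[T,T+K]$ by $g(\eta_{\cdot}),h(\eta_{\cdot})$), freeze the anticipated and the mean-field arguments, so that the driver becomes $\bar{f}(t,\eta_t) := \dbE'\big[f(t,\eta_t,U'_t,V'_t,U_t,V_t,U'_{t+\delta(t)},V'_{t+\zeta(t)},U_{t+\delta(t)},V_{t+\zeta(t)})\big]$, which is a $C^{0,1}_{pol}$-continuous function of $(t,\eta_t)$. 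Invoking Proposition \ref{WS} (Wen--Shi), the linear fractional BSDE with terminal value $g(\eta_T)$ and generator $\bar{f}$ has a unique solution $(Y,Z)\in\widetilde{\mathcal{V}}_{[0,T]}\times\widetilde{\mathcal{V}}^{H}_{[0,T]}$; gluing on the prescribed values over $[T,T+K]$ defines $\Phi(U,V)=(Y,Z)$, a self-map of the solution space. Assumptions (H1)--(H2) guarantee the required growth/measurability so that $\bar f$ lands in the right class and the terminal data satisfy the integrability needed.

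Next I would prove $\Phi$ is a contraction for a suitable weight $\beta$. Take two inputs $(U^1,V^1),(U^2,V^2)$, write $(Y^i,Z^i)=\Phi(U^i,V^i)$, set $\hat U = U^1-U^2$, etc. Apply the a priori estimate \eqref{34} of Proposition \ref{WS} to the difference $(\hat Y,\hat Z)$, whose generator is $\bar f^1 - \bar f^2$. Using the Lipschitz bound (H2) and Jensen/tower property to dominate the conditional-expectation terms, one gets
\[
\dbE\Big(e^{\beta t}|\hat Y_t|^2 + \tfrac{\beta}{2}\int_t^T e^{\beta s}|\hat Y_s|^2 ds + \tfrac{2}{M}\int_t^T s^{2H-1}e^{\beta s}|\hat Z_s|^2 ds\Big) \le \tfrac{2}{\beta}\,\dbE\int_t^T e^{\beta s}\big|\bar f^1(s,\eta_s)-\bar f^2(s,\eta_s)\big|^2 ds,
\]
and the right side is controlled by $\frac{C'}{\beta}\dbE\int_t^{T+K} e^{\beta s}\big(|\hat U_s|^2 + s^{2H-1}|\hat V_s|^2\big)ds$, where the shift from $[t,T]$ to $[t,T+K]$ is exactly where condition (ii) on $\delta,\zeta$ (and the continuity giving $t^{2H-1}\asymp (t+\delta(t))^{2H-1}$, $t^{2H-1}\asymp (t+\zeta(t))^{2H-1}$ up to constants on $[0,T]$) is used. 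Choosing $\beta$ large makes $C'/\beta<1$, so $\Phi$ is a strict contraction in $\|\cdot\|^2+\|\cdot\|^2$; Banach's fixed point theorem yields the unique solution of \eqref{31}.

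For the a priori estimate \eqref{20}, I would apply Proposition \ref{WS} once more to the genuine solution, now with $\bar f(s,\eta_s)$ equal to the frozen driver evaluated at $(Y,Z)$ itself. Bounding $|\bar f(s,\eta_s)|^2 \le C\big(|f_0(s,\eta_s)|^2 + |Y_s|^2 + s^{2H-1}|Z_s|^2 + \dbE[\text{shifted } Y,Z \text{ terms}]\big)$, plugging into \eqref{34}, moving the shifted terms to the left via condition (ii), and absorbing the $|Y_s|^2$, $s^{2H-1}|Z_s|^2$ contributions into the left-hand side by taking $\beta$ large (and using the terminal-segment integrability from (H1) for the $\int_T^{T+K}$ pieces), gives precisely \eqref{20} with $\Theta(t,T,K)$ as stated. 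A Gronwall-type step handles the residual $\int_t^T e^{\beta s}|Y_s|^2 ds$ term after absorption.

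The main obstacle I anticipate is the bookkeeping around the anticipated terms: one must track carefully how the conditional expectations $\dbE'[\,\cdot\mid\mathcal F_t]$ and $\dbE[\,\cdot\mid\mathcal F_t]$ in (H2) interact with the $\dbE\int_0^{T+K}e^{\beta s}(\cdot)ds$ norms — integrating out the conditioning via the tower property, and then invoking the integral comparison in condition (ii) to convert $\int_t^T(\cdot)(s+\delta(s))\,ds$ and $\int_t^T(\cdot)(s+\zeta(s))\,ds$ into integrals over $[t,T+K]$ against the correct weights (uniform weight for $Y$, weight $s^{2H-1}$ for $Z$). Keeping the weight $s^{2H-1}$ consistent under the shift $s\mapsto s+\zeta(s)$ requires the elementary observation that on the compact interval $[0,T]$ the ratio $(s+\zeta(s))^{2H-1}/s^{2H-1}$ is bounded above and below by positive constants; this lets the $Z$-terms be absorbed cleanly. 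Everything else is the now-standard weighted-norm contraction argument for fractional BSDEs.
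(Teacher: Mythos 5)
Your overall scheme (freeze the mean-field and anticipated arguments, invoke Proposition \ref{WS} to define the map, then contract in the weighted norms) is the same as the paper's, but there is a genuine gap at the contraction step. Under (H2) the Lipschitz bound in the $z$-variables is \emph{unweighted}: $|f^1-f^2|\le C(\cdots+|\hat z_s|+\cdots)$, so the right-hand side of your estimate involves $\dbE\int_t^T e^{\beta s}|\hat V_s|^2\,ds$, whereas the norm on the $Z$-component only controls $\dbE\int_t^T e^{\beta s}s^{2H-1}|\hat V_s|^2\,ds$ with the weight $s^{2H-1}$ degenerating at $s=0$ (recall $H>1/2$). Converting one into the other forces a factor $s^{1-2H}$, which is integrable but \emph{not uniformly bounded} on $[0,T]$; since the exponential weight $e^{\beta s}$ appears identically on both sides, no choice of large $\beta$ can absorb it, and your claimed bound ``$C'/\beta<1$ for $\beta$ large'' does not hold on the whole interval. (The large-$\beta$ global contraction is exactly the paper's \emph{second} approach, Theorem \ref{th}, and it requires the strengthened hypothesis (H3), where the Lipschitz constant in $z$ already carries the factor $t^{H-1/2}$; the whole point of having two proofs is that this trick fails under (H2).) The paper's first proof instead partitions $[0,T]$ into subintervals $[t_i,t_{i+1}]$ and shows $I$ is a contraction on each piece, because the offending quantity enters through factors of the form $(T-t_n)$ and $(T^{2-2H}-t_n^{2-2H})/(1-H)$, which are made small by shrinking the interval rather than by enlarging $\beta$; the solution is then patched together backward in time. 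Your argument needs this localization (or the stronger hypothesis (H3)) to close.

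Two smaller points. First, your claim that $(s+\zeta(s))^{2H-1}/s^{2H-1}$ is bounded above and below on $[0,T]$ is false near $s=0$ when $\zeta(0)>0$; fortunately only the one-sided inequality $s^{2H-1}\le (s+\zeta(s))^{2H-1}$ is needed before applying condition (ii), so this is a misstatement rather than an error in the direction actually used. Second, for the a priori estimate \rf{20} the same weight mismatch reappears: Young's inequality on $|Y_s|\,|Z_s|$ produces a coefficient of order $s^{1-2H}$ in front of $|Y_s|^2$, which again cannot be absorbed by $\beta$; the paper handles it by Gronwall's inequality with the integrable singular coefficient, yielding the finite factor $\exp\big(8C^2M(L+1)\tfrac{T^{2-2H}-t^{2-2H}}{2-2H}\big)$. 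Your closing ``Gronwall-type step'' is the right idea, but it should be stated as the main mechanism rather than an afterthought to a large-$\beta$ absorption that does not work here.
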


\begin{proof}
For any given $(y_{t},z_{t}) \in \widetilde{\mathcal{V}}_{[0,T+K]} \times \widetilde{\mathcal{V}}^{H}_{[0,T+K]}$, we consider the following simple BSDE:
\begin{equation}\label{32}
  \begin{cases}
    -dY_{t}=\dbE'[f(t,\eta_{t},y'_{t},z'_{t},y_{t},z_{t},y'_{t+\delta(t)},z'_{t+\zeta(t)},y_{t+\delta(t)},z_{t+\zeta(t)})] dt
            - Z_{t} dB_{t}^{H}, \ \ \ t\in [0,T]; \\
      Y_{t}=g(\eta_{t}), \ \  Z_{t}=h(\eta_{t}), \ \ \ t\in[T,T+K].
  \end{cases}
\end{equation}
From Proposition \ref{WS}, note that $Y_{t}=g(\eta_{t})$ and $Z_{t}=h(\eta_{t})$ are given when $t\in[T,T+K]$,
we obtain that BSDE (\ref{32}) has a unique solution
$(Y_{\cdot},Z_{\cdot}) \in \widetilde{\mathcal{V}}_{[0,T+K]} \times \widetilde{\mathcal{V}}^{H}_{[0,T+K]}$.

\ms

Define a mapping
$I:\widetilde{\mathcal{V}}_{[0,T+K]} \times \widetilde{\mathcal{V}}^{H}_{[0,T+K]}\longrightarrow
\widetilde{\mathcal{V}}_{[0,T+K]} \times \widetilde{\mathcal{V}}^{H}_{[0,T+K]}$
such that $I[(y_{\cdot},z_{\cdot})]=(Y_{\cdot},Z_{\cdot})$.
Due to the values of $Y_{t}$ and $Z_{t}$ are given when $t\in[T,T+K]$,
we essentially only need to prove (\ref{31}) has a unique solution on $[0,T]$.
Let $n\in \mathbb{N}$ and $t_{i}=\frac{i-1}{n}T, i=1,...,n+1$.
First we solve (\ref{31}) on $[t_{n},T]$.
In order to do this, we show $I$ is a contraction on $\widetilde{\mathcal{V}}_{[t_{n},T+K]} \times \widetilde{\mathcal{V}}^{H}_{[t_{n},T+K]}$.

\ms

For two arbitrary elements $(y_{\cdot},z_{\cdot})$ and
 $(\bar{y}_{\cdot},\bar{z}_{\cdot})\in \widetilde{\mathcal{V}}_{[t_{n},T+K]} \times \widetilde{\mathcal{V}}^{H}_{[t_{n},T+K]}$,
set $(Y_{\cdot},Z_{\cdot})=I[(y_{\cdot},z_{\cdot})]$ and $(\bar{Y}_{\cdot},\bar{Z}_{\cdot})=I[(\bar{y}_{\cdot},\bar{z}_{\cdot})]$.
We denote their differences by
\begin{equation*}
    (\hat{y}_{\cdot},\hat{z}_{\cdot})=(y_{\cdot}-\bar{y}_{\cdot},z_{\cdot}-\bar{z}_{\cdot}), \qq
  (\hat{Y}_{\cdot},\hat{Z}_{\cdot})=(Y_{\cdot}-\bar{Y}_{\cdot},Z_{\cdot}-\bar{Z}_{\cdot}).
\end{equation*}
By applying It\^{o} formula (Proposition \ref{3}), for $t\in [t_{n},T]$, one has
\bel{25}\ba{ll}
\ds e^{\beta t}\hat{Y}_{t}^{2} + \beta \int_t^T e^{\beta s}\hat{Y}_{s}^{2} ds
     + 2\int_t^T e^{\beta s}\mathbb{D}_{s}^{H} \hat{Y}_{s}\hat{Z}_{s} ds
     +2\int_t^T e^{\beta s}\hat{Y}_{s}\hat{Z}_{s} dB_{s}^{H} \\
\ns\ds =  2\int_t^T e^{\beta s}\hat{Y}_{s}
     \dbE'\big[f(s,\eta_{s},y'_{s},z'_{s},y_{s},z_{s},y'_{s+\delta(s)},z'_{s+\zeta(s)},y_{s+\delta(s)},z_{s+\zeta(s)})\\
\ns\ds\qq\qq\qq\ \ \ \ - f(s,\eta_{s},\overline{y}'_{s},\overline{z}'_{s},\overline{y}_{s},\overline{z}_{s},
\bar{y}'_{s+\delta(s)},\bar{z}'_{s+\zeta(s)},\bar{y}_{s+\delta(s)},\bar{z}_{s+\zeta(s)})\big] ds.
\ea\ee
We know (see Hu and Peng \cite{Hu}, Maticiuc and Nie \cite{Maticiuc}) that $\mathbb{D}_{s}^{H} \hat{Y}_{s} = \frac{\hat{\sigma}_{s}}{\sigma_{s}} \hat{Z}_{s}$.
Moreover, by Remark 6 in Maticiuc and Nie \cite{Maticiuc}, there is a constant $M>0$ such that
$$\frac{t^{2H-1}}{M}\leq \frac{\hat{\sigma}_{t}}{\sigma_{t}}\leq M t^{2H-1},\qq \forall t\in[0,T].$$
Without loss of generality, we can choose $M>2$ in the following discussion.
Then from (\ref{25}) and Proposition \ref{2}, we have
\bel{35}\ba{ll}
\ds \dbE\left(e^{\beta t}\hat{Y}_{t}^{2} + \beta \int_t^T e^{\beta s}\hat{Y}_{s}^{2} ds
     +  \frac{2}{M}\int_t^T e^{\beta s}s^{2H-1}\hat{Z}_{s}^{2} ds \right)\\
\ns\ds \leq   2\int_t^T e^{\beta s}\hat{Y}_{s}
     \dbE'\big[f(s,\eta_{s},y'_{s},z'_{s},y_{s},z_{s},y'_{s+\delta(s)},z'_{s+\zeta(s)},y_{s+\delta(s)},z_{s+\zeta(s)})\\
\ns\ds\qq\qq\qq \ \ \ \ - f(s,\eta_{s},\overline{y}'_{s},\overline{z}'_{s},\overline{y}_{s},\overline{z}_{s},
         \bar{y}'_{s+\delta(s)},\bar{z}'_{s+\zeta(s)},\bar{y}_{s+\delta(s)},\bar{z}_{s+\zeta(s)})\big] ds.
\ea\ee
From assumption (H2) and (\ref{34.2}) we obtain
\bel{87}\ba{ll}
\ds \dbE\left(e^{\beta t}\hat{Y}_{t}^{2} + \beta \int_t^T e^{\beta s}\hat{Y}_{s}^{2} ds
     +  \frac{2}{M}\int_t^T e^{\beta s}s^{2H-1}\hat{Z}_{s}^{2} ds \right) \\
\ns\ds\leq  2C \dbE\int_t^Te^{\beta s}|\hat{Y}_{s}|\dbE'\bigg(|\hat{y}'_{s}|+|\hat{z}'_{s}|
        + \dbE'\bigg[|\hat{y}'_{s+\delta(s)}|+|\hat{z}'_{s+\delta(s)}|\bigg| \mathcal{F}_{s} \bigg] \bigg) ds  \\
\ns\ds\ \ \ +2C \dbE\int_t^Te^{\beta s}|\hat{Y}_{s}|\dbE'\bigg(|\hat{y}_{s}|+|\hat{z}_{s}|
        + \dbE\bigg[|\hat{y}_{s+\delta(s)}|+|\hat{z}_{s+\delta(s)}|\bigg| \mathcal{F}_{s} \bigg] \bigg) ds \\
\ns\ds=4C \int_t^Te^{\beta s}\dbE\bigg(|\hat{Y}_{s}| \big(|\hat{y}_{s}|+|\hat{z}_{s}| \big) \bigg) ds
        +  4C \int_t^Te^{\beta s}\dbE\bigg( |\hat{Y}_{s}| \big(|\hat{y}_{s+\delta(s)}|+|\hat{z}_{s+\delta(s)}| \big) \bigg) ds. \ \
\ea\ee
Therefore by choosing $\beta\geq 1$, and using H\"{o}lder's inequality and Jensen's inequality we get
\bel{83}\ba{ll}
\ds \dbE\left(e^{\beta t}\hat{Y}_{t}^{2} + \int_t^T e^{\beta s}\hat{Y}_{s}^{2} ds
     +  \frac{2}{M}\int_t^T e^{\beta s}s^{2H-1}\hat{Z}_{s}^{2} ds \right)\\
\ns\ds \leq 4C\int_t^T \big(e^{\beta s}
\dbE|\hat{Y}_{s}|^{2}\big)^{\frac{1}{2}} \bigg(\big[e^{\beta s} \dbE(|\hat{y}_{s}|
+ |\hat{z}_{s}|)^{2}\big]^{\frac{1}{2}} + \big[e^{\beta s} \dbE(|\hat{y}_{s+\delta(s)}|
+ |\hat{z}_{s+\zeta(s)}|)^{2}\big]^{\frac{1}{2}} \bigg) ds. \
\ea\ee
Denote $x(t)=\big(e^{\beta t} E|\hat{Y}_{t}|^{2}\big)^{\frac{1}{2}}$. From (\ref{83}) we have
\begin{equation*}
  x(t)^{2}\leq  4C\int_t^T x(s)\bigg(\big[e^{\beta s} \dbE(|\hat{y}_{s}|
+ |\hat{z}_{s}|)^{2}\big]^{\frac{1}{2}} + \big[e^{\beta s} \dbE(|\hat{y}_{s+\delta(s)}|
+ |\hat{z}_{s+\zeta(s)}|)^{2}\big]^{\frac{1}{2}} \bigg) ds.
\end{equation*}
Applying Lemma 20 in Maticiuc and Nie \cite{Maticiuc} to the above inequality one has
$$\ba{ll}
\ds x(t)\leq 2C\int_t^T \bigg(\big[e^{\beta s} \dbE(|\hat{y}_{s}|
+ |\hat{z}_{s}|)^{2}\big]^{\frac{1}{2}} + \big[e^{\beta s} \dbE(|\hat{y}_{s+\delta(s)}|
+ |\hat{z}_{s+\zeta(s)}|)^{2}\big]^{\frac{1}{2}} \bigg) ds\\
\ns\ds\qq \leq 2\sqrt{2}C \int_t^T \bigg(e^{\beta s} \dbE\big(|\hat{y}_{s}|^{2} + |\hat{z}_{s}|^{2}\big)\bigg)^{\frac{1}{2}} ds+ 2\sqrt{2}C \int_t^T \bigg(e^{\beta s} \dbE\big(|\hat{y}_{s+\delta(s)}|^{2}
+ |\hat{z}_{s+\zeta(s)}|^{2}\big)\bigg)^{\frac{1}{2}} ds.\ \ \ \ \
\ea$$
Therefore for $t\in [t_{n},T]$,
$$x(t)^{2}\leq 16C^{2} \bigg(\int_t^T \big[e^{\beta s} \dbE(|\hat{y}_{s}|^{2} + |\hat{z}_{s}|^{2})\big]^{\frac{1}{2}} ds\bigg)^{2} + 16C^{2} \bigg(\int_t^T \big[e^{\beta s} \dbE(|\hat{y}_{s+\delta(s)}|^{2}
 + |\hat{z}_{s+\zeta(s)}|^{2})\big]^{\frac{1}{2}} ds \bigg)^{2}.$$
Now we compute
\bel{84}\ba{ll}
\ds \int_{t_{n}}^T x(s)^{2} ds
  \leq 16C^{2}(T-t_{n}) \bigg(\int_{t_{n}}^T \big[e^{\beta s} \dbE(|\hat{y}_{s}|^{2} + |\hat{z}_{s}|^{2})\big]^{\frac{1}{2}} ds\bigg)^{2} \\
\ns\ds\qq\qq\qq \ +16C^{2}(T-t_{n})\bigg(\int_{t_{n}}^T \big[e^{\beta s} \dbE(|\hat{y}_{s+\delta(s)}|^{2}
       +|\hat{z}_{s+\zeta(s)}|^{2})\big]^{\frac{1}{2}}ds\bigg)^{2} \qq \ \\
\ns\ds\qq\qq\ \ \ \ =: A_{1} + A_{2}.
\ea\ee
For  the term $A_{2}$ of (\ref{84}) we deduce
\bel{85}\ba{ll}
\ds \bigg(\int_{t_{n}}^T \big[e^{\beta s} \dbE(|\hat{y}_{s+\delta(s)}|^{2}+|\hat{z}_{s+\zeta(s)}|^{2})\big]^{\frac{1}{2}}ds\bigg)^{2} \\
\ns\ds \leq \bigg(\int_{t_{n}}^T \big[e^{\beta s} \dbE|\hat{y}_{s+\delta(s)}|^{2} \big]^{\frac{1}{2}} ds
      +\int_{t_{n}}^T \big[e^{\beta s} \dbE|\hat{z}_{s+\zeta(s)}|^{2}\big]^{\frac{1}{2}}ds\bigg)^{2} \\
\ns\ds \leq 2\bigg(\int_{t_{n}}^T \big[e^{\beta s} \dbE|\hat{y}_{s+\delta(s)}|^{2} \big]^{\frac{1}{2}} ds \bigg)^{2}
      +2\bigg(\int_{t_{n}}^T \big[\frac{1}{s^{2H-1}} \cdot e^{\beta s} s^{2H-1}\dbE|\hat{z}_{s+\zeta(s)}|^{2}\big]^{\frac{1}{2}}ds\bigg)^{2}\\
\ns\ds \leq 2(T-t_{n})\int_{t_{n}}^Te^{\beta s} \dbE|\hat{y}_{s+\delta(s)}|^{2} ds
      +\frac{2(T^{2-2H}-t_{n}^{2-2H})}{2-2H} \int_{t_{n}}^Te^{\beta s} s^{2H-1}\dbE|\hat{z}_{s+\zeta(s)}|^{2} ds\\
\ns\ds \leq \Big(2(T-t_{n})+ \frac{T^{2-2H}-t_{n}^{2-2H}}{1-H}\Big)
      \dbE \int_{t_{n}}^T \big[e^{\beta (s+\delta(s))}|\hat{y}_{s+\delta(s)}|^{2}
       + e^{\beta (s+\zeta(s))}(s+\zeta(s))^{2H-1}|\hat{z}_{s+\zeta(s)}|^{2}\big] ds \\
\ns\ds \leq \Big(2(T-t_{n})+ \frac{T^{2-2H}-t_{n}^{2-2H}}{1-H}\Big)L\cdot
      \dbE \int_{t_{n}}^{T+K} e^{\beta s}\big(|\hat{y}_{s}|^{2} + s^{2H-1}|\hat{z}_{s}|^{2}\big) ds.
\ea\ee
In the last inequality, we used the condition (ii) satisfied by $\delta(\cd)$ and $\zeta(\cd)$.
Similarly, for $A_{1}$ of (\ref{84}),
\bel{303}\ba{ll}
\ds \bigg(\int_{t_{n}}^T \big[e^{\beta s}\mathbb{E}
(|\hat{y}_{s}|^{2} + |\hat{z}_{s}|^{2})\big]^{\frac{1}{2}} ds\bigg)^{2}\\
\ns\ds \leq\big[2(T-t_{n})+ \frac{T^{2-2H}-t_{n}^{2-2H}}{1-H}\big]
      \dbE\int_{t_{n}}^{T} e^{\beta s}\big(|\hat{y}_{s}|^{2} + s^{2H-1}|\hat{z}_{s}|^{2}\big) ds\\
\ns\ds \leq \big[2(T-t_{n})+ \frac{T^{2-2H}-t_{n}^{2-2H}}{1-H}\big]
      \dbE \int_{t_{n}}^{T+K} e^{\beta s}\big(|\hat{y}_{s}|^{2} + s^{2H-1}|\hat{z}_{s}|^{2}\big) ds. \qq \
\ea\ee
Combining (\ref{84}-\ref{303}), it follows that
\begin{equation}\label{304}
  \int_{t_{n}}^T x(s)^{2} ds
  \leq(T-t_{n})G\cdot \dbE \int_{t_{n}}^{T+K} e^{\beta s}\big(|\hat{y}_{s}|^{2} + s^{2H-1}|\hat{z}_{s}|^{2}\big) ds,
  \qq\qq \ \ \
\end{equation}
where $G=16C^{2} (L+1)\big[2(T-t_{n})+ \frac{T^{2-2H}-t_{n}^{2-2H}}{1-H}\big]$. And similarly one has
\begin{equation}\label{305}
  \int_{t_{n}}^T \frac{1}{s^{2H-1}}x(s)^{2} ds
  \leq G \frac{T^{2-2H}-t_{n}^{2-2H}}{2-2H} \mathbb{E} \int_{t_{n}}^{T+K} e^{\beta s}\big(|\hat{y}_{s}|^{2} + s^{2H-1}|\hat{z}_{s}|^{2}\big) ds.
\end{equation}
Now from (\ref{87}),
$$\ba{ll}
\ds  \dbE\left( \int_{t_{n}}^T e^{\beta s}|\hat{Y}_{s}|^{2} ds
       + \frac{2}{M}\int_{t_{n}}^T e^{\beta s}s^{2H-1}|\hat{Z}_{s}|^{2} ds\right)\\
\ns\ds \leq  4C \dbE\int_{t_{n}}^T e^{\beta s} \bigg(\frac{1}{v}(1+\frac{1}{s^{2H-1}})|\hat{Y}_{s}|^{2}
        + v|\hat{y}_{s}|^{2} + vs^{2H-1}|\hat{z}_{s}|^{2}  \bigg) ds \\
\ns\ds\ \ \ + 4C \dbE\int_{t_{n}}^T e^{\beta s} \bigg(\frac{1}{v}\big(1+\frac{1}{s^{2H-1}}\big)|\hat{Y}_{s}|^{2}
        + v|\hat{y}_{s+\delta(s)}|^{2} + vs^{2H-1}|\hat{z}_{s+\zeta(s)}|^{2}  \bigg) ds \\
\ns\ds \leq  \frac{8C}{v} \dbE\int_{t_{n}}^T e^{\beta s}(1+\frac{1}{s^{2H-1}})|\hat{Y}_{s}|^{2} ds
        + 4C v \dbE\int_{t_{n}}^T e^{\beta s}\big(|\hat{y}_{s}|^{2} + s^{2H-1}|\hat{z}_{s}|^{2}\big) ds \\
\ns\ds\ \ \ +4Cv\dbE\int_{t_{n}}^T e^{\beta s}\big(|\hat{y}_{s+\delta(s)}|^{2}
+ s^{2H-1}|\hat{z}_{s+\zeta(s)}|^{2}\big)ds\\
\ns\ds \leq\frac{8C}{v} \dbE\int_{t_{n}}^T e^{\beta s}(1+\frac{1}{s^{2H-1}})|\hat{Y}_{s}|^{2} ds
        + 4C v(1+L) \dbE\int_{t_{n}}^{T+K} e^{\beta s}\big(|\hat{y}_{s}|^{2} + s^{2H-1}|\hat{z}_{s}|^{2}\big) ds,
\ea$$
where $v>0$. Using the inequalities (\ref{304}) and (\ref{305}), and note that $M>2$, we obtain
\begin{equation*}
  \dbE\left( \int_{t_{n}}^T e^{\beta s}|\hat{Y}_{s}|^{2} ds + \int_{t_{n}}^T e^{\beta s}s^{2H-1}|\hat{Z}_{s}|^{2} ds\right)
\leq \widetilde{G} \dbE\int_{t_{n}}^{T+K} e^{\beta s}\big(|\hat{y}_{s}|^{2} + s^{2H-1}|\hat{z}_{s}|^{2}\big) ds,
\end{equation*}
or
\begin{equation*}
  \dbE \int_{t_{n}}^{T+K} e^{\beta s}\big(|\hat{Y}_{s}|^{2} + s^{2H-1}|\hat{Z}_{s}|^{2}\big) ds
\leq \widetilde{G} \dbE\int_{t_{n}}^{T+K} e^{\beta s}\big(|\hat{y}_{s}|^{2} + s^{2H-1}|\hat{z}_{s}|^{2}\big) ds,
 \qq\ \ \ \
\end{equation*}
where $$\widetilde{G}=\frac{4CGM}{v}(T-t_{n}) + \frac{4CGM}{v(1-H)}(T^{2-2H}-t_{n}^{2-2H}) + 2CM(1+L)v.\qq\qq\qq$$
Choosing $v$ such that $2CM(1+L)v<\frac{1}{4}$,
and taking $n$ large enough such that
$$\frac{4CGM}{v}(T-t_{n})<\frac{1}{4}, \qq \frac{4CGM}{v(1-H)}(T^{2-2H}-t_{n}^{2-2H})<\frac{1}{4},\qq\qq\qq\qq\qq\ $$
then
\begin{equation*}
  \dbE \int_{t_{n}}^{T+K} e^{\beta s}\big(|\hat{Y}_{s}|^{2} + s^{2H-1}e^{\beta s}|\hat{Z}_{s}|^{2}\big) ds
\leq \frac{3}{4} \dbE\int_{t_{n}}^{T+K} e^{\beta s}\big(|\hat{y}_{s}|^{2} + s^{2H-1}|\hat{z}_{s}|^{2}\big) ds.\qq
\end{equation*}
Hence $I$ is a contraction on $\widetilde{\mathcal{V}}_{[t_{n},T+K]} \times \widetilde{\mathcal{V}}^{H}_{[t_{n},T+K]}$, which implies that BSDE (\ref{31}) has a unique solution on $[t_{n},T]$.
The next step is to solve (\ref{31}) on  $[t_{n-1},t_{n}].$
In order to do this, one can show $I$ is a contraction on $\widetilde{\mathcal{V}}_{[t_{n-1},t_{n}+K]} \times \widetilde{\mathcal{V}}^{H}_{[t_{n-1},t_{n}+K]}$.
With the same arguments, repeating the above technique we obtain that  BSDE (\ref{31}) admits a unique solution on
 $\widetilde{\mathcal{V}}_{[0,T+K]} \times \widetilde{\mathcal{V}}_{[0,T+K]}^{H}$.

\ms

Now we prove the estimate (\ref{20}).
Suppose $(Y,Z)$ is the solution of BSDE (\ref{31}).
From (H2), similarly to (\ref{35}) we obtain
$$\ba{ll}
\ds \dbE\left(e^{\beta t}Y_{t}^{2} + \beta \int_t^T e^{\beta s}Y_{s}^{2} ds
     +  \frac{2}{M}\int_t^T e^{\beta s}s^{2H-1}Z_{s}^{2} ds \right) \\
\ns\ds \leq  \dbE\bigg(e^{\beta T}|g(\eta_{T})|^{2}+  2\int_t^T e^{\beta s}Y_{s} \dbE'[f(s,\eta_{s},Y'_{s},Z'_{s},Y_{s},Z_{s},
Y'_{s+\delta(s)},Z'_{s+\zeta(s)},Y_{s+\delta(s)},Z_{s+\zeta(s)})] ds\bigg).
\ea$$
By Lipschitz continuity of $f$, similar as the above discussion, we have
$$\ba{ll}
\ds  2\dbE\int_t^T e^{\beta s} Y_{s} \dbE'[f(s,\eta_{s},Y'_{s},Z'_{s},Y_{s},Z_{s},Y'_{s+\delta(s)},Z'_{s+\zeta(s)},Y_{s+\delta(s)},Z_{s+\zeta(s)})] ds \\
\ns\ds \leq 4\dbE\int_t^T e^{\beta s}|Y_{s}|\bigg(C\big(|Y_{s}|+|Z_{s}|+|Y_{s+\delta(s)}|
+|Z_{s+\zeta(s)}|\big)+|f_{0}(s,\eta_{s})|\bigg) ds  \\
\ns\ds \leq \dbE\int_t^T 4\bigg(2C+C^{2}+\frac{2C^{2}M}{s^{2H-1}}+\frac{2C^{2}ML}{s^{2H-1}}+1\bigg)e^{\beta s}|Y_{s}|^{2}ds + \frac{1}{2M}\dbE\int_t^T e^{\beta s}s^{2H-1}|Z_{s}|^{2}ds  \\
\ns\ds\ \ \ + \dbE\int_t^T e^{\beta s}|Y_{s+\delta(s)}|^{2}ds
       + \frac{1}{2ML}\dbE\int_t^T e^{\beta s}s^{2H-1}|Z_{s+\zeta(s)}|^{2}ds
       + 4\dbE\int_t^T e^{\beta s}|f_{0}(s,\eta_{s})|^{2}ds\\
\ns\ds \leq \dbE\int_t^T 4\bigg(2C+C^{2}+\frac{2C^{2}M}{s^{2H-1}}+\frac{2C^{2}ML}{s^{2H-1}}+1+L\bigg)e^{\beta s}|Y_{s}|^{2}ds + \frac{1}{M}\dbE\int_t^T e^{\beta s}s^{2H-1}|Z_{s}|^{2}ds \\
\ns\ds\ \ \ +L\dbE\int_T^{T+K} e^{\beta s}|g(\eta_{s})|^{2}ds
       + \frac{1}{2M}\dbE\int_T^{T+K} e^{\beta s}s^{2H-1}|h(\eta_{s})|^{2} dt
       + 4\dbE\int_t^T e^{\beta s}|f_{0}(s,\eta_{s})|^{2}ds. \ \ \ \
\ea$$
Thus, we have
\bel{308}\ba{ll}
\ds \dbE\left(e^{\beta t}|Y_{t}|^{2} + \frac{1}{M}\int_t^T e^{\beta s}s^{2H-1}|Z_{s}|^{2} ds\right)\\
\ns\ds \leq  R\Theta(t,T,K) + \dbE\int_t^T 4\bigg(2C+C^{2}+L+1+\frac{2C^{2}M(L+1)}{s^{2H-1}}\bigg)e^{\beta s}|Y_{s}|^{2}ds.
\ea\ee
By Gronwall's inequality,
\begin{equation*}
   e^{\beta t}\dbE|Y_{t}|^{2} \leq R\Theta(t,T,K)\exp \bigg\{ 4(2C+C^{2}+L+1)(T-t) + 8C^{2}M(L+1)\frac{T^{2-2H}-t^{2-2H}}{2-2H} \bigg\}.
\end{equation*}
Finally, from (\ref{308}), combining the above estimate one  has
\begin{equation*}
   \dbE\int_t^T e^{\beta s}s^{2H-1}|Z_{s}|^{2} ds \leq R\Theta(t,T,K).
\end{equation*}
Hence the estimate (\ref{20}) is obtained.
This completes the proof.
\end{proof}

\begin{remark}
In the proof of Theorem \ref{FirstMethod}, we first divide the interval $[0,T+K]$, and then we prove BSDE \rf{31} has unique solution in each subinterval of $[0,T+K]$. Next, we use another approach to directly prove that BSDE \rf{31} admits unique solution in $[0,T+K]$.
\end{remark}

\subsection{The second approach}
In this section we present the second approach to prove the existence and uniqueness of solutions of equation (\ref{31}).
It should be pointed out that this approach is more convenient than the above one.
However, the price of doing this is that we should strengthen the condition of the coefficient $f$ with respect to $z$.

\begin{itemize}
\item[(H3)]
Assume that $f=f(t,x,y',z',y,z,\theta',\zeta',\theta,\zeta):[0,T]\times \mathbb{R}^{5}\times L^{2}(\mathcal{F}_{r'},\mathbb{R}) \times L^{2}(\mathcal{F}_{r},\mathbb{R})\times L^{2}(\mathcal{F}_{r'},\mathbb{R}) \times L^{2}(\mathcal{F}_{r},\mathbb{R})\longrightarrow L^{2}(\mathcal{F}_{t},\mathbb{R})$ is a $C_{pol}^{0,1}$-continuous function, where $r', r \in[t,T+K]$. Moreover, there is a constant $C\geq 0$ such that,
for every $t\in [0,T]$, $x,y,\bar{y},z,\bar{z},y',\bar{y}',$ $z',\bar{z}' \in \mathbb{R}$,
$\theta_{\cdot},\bar{\theta}_{\cdot},\theta'_{\cdot},\bar{\theta}'_{\cdot}$,
$\zeta_{\cdot},\bar{\zeta}_{\cdot},\zeta'_{\cdot},\bar{\zeta}'_{\cdot}\in L_{\mathcal{F}}^2(t,T+K;\mathbb{R})$,
we have
$$\ba{ll}
\ds |f(t,x,y',z',y,z,\theta'_{r'},\zeta'_{r},\theta_{r'},\zeta_{r})
-f(t,x,\bar{y}',\bar{z}',\bar{y},\bar{z},\bar{\theta}'_{r'},,\bar{\zeta}'_{r},\bar{\theta}_{r'},\bar{\zeta}_{r})|\\
\ns\ds\leq C\bigg(|y'-\bar{y}'| +t^{H-\frac{1}{2}}|z'-\bar{z}'| + |y-\bar{y}| + t^{H-\frac{1}{2}}|z-\bar{z}| \\
\ns\ds\qq \ \ + \dbE'\bigg[  |\theta'_{r'}-\bar{\theta}'_{r'}| + r^{H-\frac{1}{2}}|\zeta'_{r}-\bar{\zeta}'_{r}| \bigg| \mathcal{F}_{t}\bigg]+ \dbE\bigg[ |\theta_{r'}-\bar{\theta}_{r'}| + r^{H-\frac{1}{2}}|\zeta_{r}-\bar{\zeta}_{r}| \bigg| \mathcal{F}_{t} \bigg] \bigg).
\ea$$
\end{itemize}

\begin{remark}\label{36}
Suppose $ \theta'$ is a square integrable, jointly measurable stochastic process.
Then we can define for all $t\in [0,T]$, $x,y,z\in \mathbb{R}$, $\theta'\in L^{2}(\mathcal{F}_{r'},\mathbb{R})$,
\begin{equation*}
  f^{\theta'}(t,x,y,z)\triangleq \dbE'[f(t,x,y,z,\theta'_{t+\delta(t)})]
   = \int_{\Omega} f(t,x,y,z,\theta'_{t+\delta(t)}(\omega')) \dbP (d\omega').
\end{equation*}
Indeed, due to the assumption on the coefficient $f$ being $C_{pol}^{0,1}$-continuous,
we know that $f^{\theta'}$ is also $C_{pol}^{0,1}$-continuous.
In addition, with the same constant $C$ of assumption (H3), for every $t\in [0,T]$, $x,y,\bar{y},z,\bar{z}
\in \mathbb{R}$, we have
\begin{equation*}
  |f^{\theta'}(t,x,y,z) - f^{\theta'}(t,x,\bar{y},\bar{z})|\leq C\big(|y_{1}-y_{2}| + t^{H-\frac{1}{2}}|z_{1}-z_{2}| \big).
\end{equation*}
\end{remark}
This remark is useful in the proof of the comparison theorem (see Section 4). Now, we show the existence and uniqueness theorem.

\begin{theorem}\label{th}
Under the assumptions (H1) and (H3),  BSDE (\ref{31}) admits a unique solution in $\widetilde{\mathcal{V}}_{[0,T+K]} \times \widetilde{\mathcal{V}}_{[0,T+K]}^{H}$.
\end{theorem}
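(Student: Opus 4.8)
The plan is to obtain $(Y,Z)$ as the unique fixed point of a \emph{single} contraction on the whole space $\widetilde{\mathcal{V}}_{[0,T+K]}\times\widetilde{\mathcal{V}}^{H}_{[0,T+K]}$, thereby avoiding the partition of $[0,T]$ used in the proof of Theorem \ref{FirstMethod}. As there, to any $(y_{\cdot},z_{\cdot})$ in that space one associates $(Y_{\cdot},Z_{\cdot})=I[(y_{\cdot},z_{\cdot})]$, the unique solution of the ``frozen'' equation \eqref{32}, which exists and lies in $\widetilde{\mathcal{V}}_{[0,T+K]}\times\widetilde{\mathcal{V}}^{H}_{[0,T+K]}$ by Proposition \ref{WS} (the values on $[T,T+K]$ being prescribed as $g(\eta_{\cdot})$, $h(\eta_{\cdot})$). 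The whole task is then to show that, once $\beta$ is chosen large, $I$ is a strict contraction for the product norm $\big(\|\cdot\|^{2}+\|\cdot\|^{2}\big)^{1/2}$ of \eqref{3.99}.

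Take two inputs and write $(\hat y,\hat z)$, $(\hat Y,\hat Z)$ for the differences of inputs and of outputs; note that $(\hat Y,\hat Z)\equiv(0,0)$ on $[T,T+K]$, since both outputs carry the same terminal data there. Applying the It\^{o} formula of Proposition \ref{3} to $e^{\beta t}\hat{Y}_{t}^{2}$, taking expectations through Proposition \ref{2}, and using $\mathbb{D}_{s}^{H}\hat{Y}_{s}=\frac{\hat{\sigma}_{s}}{\sigma_{s}}\hat{Z}_{s}$ together with the two-sided bound $\frac{s^{2H-1}}{M}\le\frac{\hat{\sigma}_{s}}{\sigma_{s}}\le M s^{2H-1}$ with $M>2$, one gets for $t\in[0,T]$ an estimate of the form
\[
\dbE\Big(e^{\beta t}\hat{Y}_{t}^{2}+\beta\!\int_{t}^{T}\!e^{\beta s}\hat{Y}_{s}^{2}\,ds+\tfrac{2}{M}\!\int_{t}^{T}\!e^{\beta s}s^{2H-1}\hat{Z}_{s}^{2}\,ds\Big)\le 2\,\dbE\!\int_{t}^{T}\!e^{\beta s}\hat{Y}_{s}\,\Delta f_{s}\,ds ,
\]
where $\Delta f_{s}$ is the difference of the two frozen drivers. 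By (H3) and the same manipulations as in Theorem \ref{FirstMethod} — in particular the identities \eqref{34.2}, which turn the primed conditional expectations into ordinary ones — the right-hand side is controlled by a constant times $\dbE\int_{t}^{T}e^{\beta s}|\hat{Y}_{s}|\big(|\hat y_{s}|+s^{H-\frac12}|\hat z_{s}|+|\hat y_{s+\delta(s)}|+(s+\zeta(s))^{H-\frac12}|\hat z_{s+\zeta(s)}|\big)\,ds$.

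The decisive observation — and the reason (H3) is assumed in place of (H2) — is that Young's inequality with a parameter $\epsilon>0$ now gives $e^{\beta s}|\hat{Y}_{s}|\cdot s^{H-\frac12}|\hat z_{s}|\le\frac{\epsilon}{2}e^{\beta s}|\hat{Y}_{s}|^{2}+\frac{1}{2\epsilon}e^{\beta s}s^{2H-1}|\hat z_{s}|^{2}$: the $\hat z$-part already carries the exact weight $s^{2H-1}$ of the norm, while the $\hat{Y}$-part carries \emph{no} singular weight and can therefore be absorbed globally into $\beta\int_{t}^{T}e^{\beta s}|\hat{Y}_{s}|^{2}\,ds$ by enlarging $\beta$ (under (H2) one is instead forced to produce the locally-but-not-globally integrable factor $s^{-(2H-1)}$ in front of $|\hat{Y}_{s}|^{2}$, whence the localization of the first approach). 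For the anticipated terms one bounds $e^{\beta s}\le e^{\beta(s+\delta(s))}$ and $e^{\beta s}\le e^{\beta(s+\zeta(s))}$ and then invokes condition (ii) applied to $m(\cdot)=e^{\beta\,\cdot}\,\dbE|\hat y_{\cdot}|^{2}$, resp.\ $m(\cdot)=e^{\beta\,\cdot}(\cdot)^{2H-1}\,\dbE|\hat z_{\cdot}|^{2}$, to bound them by $L\,\dbE\int_{0}^{T+K}e^{\beta s}\big(|\hat y_{s}|^{2}+s^{2H-1}|\hat z_{s}|^{2}\big)\,ds=L\|(\hat y,\hat z)\|^{2}$. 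Setting $t=0$ and using $\hat Y\equiv\hat Z\equiv 0$ on $[T,T+K]$ to replace $\int_{0}^{T}$ by $\int_{0}^{T+K}$ on the left, one arrives at an inequality of the shape $\frac{2}{M}\,\|(\hat Y,\hat Z)\|^{2}\le\frac{c\,(1+L)}{\epsilon}\,\|(\hat y,\hat z)\|^{2}$, with $c$ depending only on $C$ and $M$ and with $\beta$ already fixed (depending on $\epsilon$, $M$) so as to absorb all un-weighted $|\hat{Y}_{s}|^{2}$-terms. Choosing finally $\epsilon$ large enough that $\frac{M c(1+L)}{2\epsilon}<1$ makes $I$ a strict contraction on $\widetilde{\mathcal{V}}_{[0,T+K]}\times\widetilde{\mathcal{V}}^{H}_{[0,T+K]}$, and its unique fixed point is the desired solution of \eqref{31}. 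The main obstacle is precisely this last piece of bookkeeping: one must keep every $\hat z$-contribution paired with its natural weight $s^{2H-1}$ (which is exactly what the refined Lipschitz bound (H3) delivers, in contrast with (H2)), so that no non-globally-integrable factor is ever left multiplying $|\hat Y|^{2}$; once that is arranged, the single parameter $\beta$ suffices to close the contraction on the whole interval $[0,T+K]$ at once, and the argument needs neither the partition nor the a priori bound \eqref{20} of the first approach.
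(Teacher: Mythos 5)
Your proposal is correct and follows essentially the same route as the paper: a single global contraction for the frozen-coefficient map $I$ on $\widetilde{\mathcal{V}}_{[0,T+K]}\times\widetilde{\mathcal{V}}^{H}_{[0,T+K]}$, with the weights $t^{H-\frac{1}{2}}$ in (H3) keeping every $\hat z$-difference paired with the norm weight $s^{2H-1}$, and condition (ii) handling the anticipated terms. The only difference is presentational: the paper invokes the packaged a priori estimate \eqref{34} of Proposition \ref{WS}, which yields $\frac{2}{\beta}\,\dbE\int_0^T e^{\beta s}|\Delta f_s|^2\,ds$ on the right-hand side directly, whereas you re-derive the energy inequality from It\^{o}'s formula and close it with Young's inequality by hand.
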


\begin{proof}
Firstly, similar to the previous approach, for any given $(y_{t},z_{t}) \in \widetilde{\mathcal{V}}_{[0,T+K]} \times \widetilde{\mathcal{V}}^{H}_{[0,T+K]}$, we consider the following BSDE:
$$\left\{\ba{ll}
\ds -dY_{t}= \dbE'[f(t,\eta_{t},{y}^{\prime}_{t},{z}^{\prime}_{t},y_{t},z_{t},{y}^{\prime}_{t+\delta(t)}, {z}^{\prime}_{t+ \zeta(t)},y_{t+\delta(t)},z_{t+ \zeta(t)})] dt - Z_{t} dB_{t}^{H}, \qq t\in [0,T]; \\
\ns\ds Y_{t}=g(\eta_{t}), \qq Z_{t}=h(\eta_{t}), \qq  t\in[T,T+K].
\ea\right.$$
Define a mapping
$I:\widetilde{\mathcal{V}}_{[0,T+K]} \times \widetilde{\mathcal{V}}^{H}_{[0,T+K]}\longrightarrow \widetilde{\mathcal{V}}_{[0,T+K]} \times \widetilde{\mathcal{V}}^{H}_{[0,T+K]}$
such that $I[(y_{\cdot},z_{\cdot})]=(Y_{\cdot},Z_{\cdot})$.
Now we show that $I$ is a contraction mapping.
For two arbitrary elements $(y_{\cdot},z_{\cdot})$ and
 $(\bar{y}_{\cdot},\bar{z}_{\cdot})\in \widetilde{\mathcal{V}}_{[0,T+K]} \times \widetilde{\mathcal{V}}^{H}_{[0,T+K]}$,
set $(Y_{\cdot},Z_{\cdot})=I[(y_{\cdot},z_{\cdot})]$ and $(\bar{Y}_{\cdot},\bar{Z}_{\cdot})=I[(\bar{y}_{\cdot},\bar{z}_{\cdot})]$.
We denote their differences by
\begin{equation*}
    (\hat{y}_{\cdot},\hat{z}_{\cdot})=(y_{\cdot}-\bar{y}_{\cdot},z_{\cdot}-\bar{z}_{\cdot}), \qq
  (\hat{Y}_{\cdot},\hat{Z}_{\cdot})=(Y_{\cdot}-\bar{Y}_{\cdot},Z_{\cdot}-\bar{Z}_{\cdot}).
\end{equation*}
By the estimate (\ref{34}) we have
$$\ba{ll}
\ds \dbE\int_0^T e^{\beta s}\Big(\frac{\beta}{2}|\hat{Y}_{s}|^{2}  + \frac{2}{M}s^{2H-1}|\hat{Z}_{s}|^{2} \Big)ds\\
\ns\ds \leq \frac{2}{\beta} {\dbE}\int_0^T e^{\beta s}
      \Big|\dbE'\big[
      f(s,\eta_{s},y'_s,z'_s,y_s,z_s,y'_{s+\delta(s)},z'_{s+\delta(s)},y_{s+\delta(s)},z_{s+\delta(s)}) \\
\ns\ds\qq\qq\qq \ \ \ \ \ -
 f(s,\eta_{s},\bar{y}'_s,\bar{z}'_s,\bar{y}_s,\bar{z}_s,\bar{y}'_{s+\delta(s)},\bar{z}'_{s+\delta(s)},
 \bar{y}_{s+\delta(s)},\bar{z}_{s+\delta(s)})
  \big]\Big|^{2} ds.
\ea$$
From assumption (H3), Jensen's inequality and \rf{34.2} we obtain
$$\ba{ll}
\ds  {\dbE}\bigg[\Big|\dbE'\big[
      f(s,\eta_{s},y'_s,z'_s,y_s,z_s,y'_{s+\delta(s)},z'_{s+\delta(s)},y_{s+\delta(s)},z_{s+\delta(s)}) \\
\ns\ds\qq \ \ \ -
 f(s,\eta_{s},\bar{y}'_s,\bar{z}'_s,\bar{y}_s,\bar{z}_s,\bar{y}'_{s+\delta(s)},\bar{z}'_{s+\delta(s)},
 \bar{y}_{s+\delta(s)},\bar{z}_{s+\delta(s)})
  \big]\Big|^2\bigg]  \\
\ns\ds\leq \dbE\bigg[\dbE'\Big[\Big|
      f(s,\eta_{s},y'_s,z'_s,y_s,z_s,y'_{s+\delta(s)},z'_{s+\delta(s)},y_{s+\delta(s)},z_{s+\delta(s)}) \\
\ns\ds\qq\qq -
 f(s,\eta_{s},\bar{y}'_s,\bar{z}'_s,\bar{y}_s,\bar{z}_s,\bar{y}'_{s+\delta(s)},\bar{z}'_{s+\delta(s)},
 \bar{y}_{s+\delta(s)},\bar{z}_{s+\delta(s)})
\Big|^2\Big]\bigg]  \\
\ns\ds \leq C^{2} {\dbE}\bigg[{\dbE}^{\prime}\Big[ \Big( |\hat{y}'_s| + |\hat{y}_s| +s^{H-\frac{1}{2}}|\hat{z}'_s|+ s^{H-\frac{1}{2}}|\hat{z}_s| + {\dbE}^{\prime}\big[|\hat{y}'_{s+\delta(s)}|\big|\mathcal{F}_{t}\big]+ {\dbE}\big[|\hat{y}_{s+\delta(s)}|\big|\mathcal{F}_{t}\big]\\
\ns\ds\qq\qq \ \ \ \ \ + \big(s+\zeta(s)\big)^{H-\frac{1}{2}} {\dbE}^{\prime}\big[|\hat{z}'_{s+\delta(s)}|\big|\mathcal{F}_{t}\big]
+ \big(s+\zeta(s)\big)^{H-\frac{1}{2}}{\dbE}\big[|\hat{z}_{s+\delta(s)}|\big|\mathcal{F}_{t}\big]\Big)^{2}\Big]\bigg] \\
\ns\ds  \leq  16 C ^{2} \dbE\bigg[|\hat{y}_s|^{2}
+ s^{2H-1} |\hat{z}_s|^{2} + |\hat{y}_{s+\delta(s)}|^{2} + \big(s+\zeta(s)\big)^{2H-1}|\hat{z}_{s+\zeta(s)}|^{2} \bigg],
\ea$$
where we used the notation $(\hat{y}'_{\cdot},\hat{z}'_{\cdot})=(y'_{\cdot}-\bar{y}'_{\cdot},z'_{\cdot}-\bar{z}'_{\cdot})$ and the fact that $(a+b)^{2} \leq 2 a^{2} + 2 b^{2}$.  Then note that $\delta$ and $\zeta$ satisfy (i) and (ii), we obtain
$$\ba{ll}
\ds{\dbE}\int_0^T e^{\beta s}\Big(\frac{\beta}{2}|\hat{Y}_{s}|^{2}  + \frac{2}{M}s^{2H-1}|\hat{Z}_{s}|^{2} \Big)ds\\
\ns\ds \leq \frac{32 C^{2}}{\beta} {\dbE}\int_0^T e^{\beta s} \Big( |\hat{y}_s|^{2}
+ s^{2H-1} |\hat{z}_s|^{2} + |\hat{y}_{s+\delta(s)}|^{2} + \big(s+\zeta(s)\big)^{2H-1}|\hat{z}_{s+\zeta(s)}|^{2}\Big) ds\qq \\
\ns\ds \leq \frac{32 C^{2} (L+1)}{\beta} {\dbE}\int_{0}^{T+K} e^{\beta s} \Big(|\hat{y}_s|^{2}
+ s^{2H-1} |\hat{z}_s|^{2} \Big) ds.
\ea$$
Therefore one has
\begin{equation*}
{\dbE}\int_0^T e^{\beta s}\Big(\frac{M \beta}{4}|\hat{Y}_{s}|^{2}  + s^{2H-1}|\hat{Z}_{s}|^{2} \Big)ds
\leq \frac{16 C^{2}(L+1)M}{\beta} {\dbE}\int_{0}^{T+K} e^{\beta s} \Big(|\hat{y}_s|^{2}
+ s^{2H-1} |\hat{z}_s|^{2} \Big) ds.
\end{equation*}
Finally, by letting $\beta = 32 C^{2}(L+1)M + \frac{4}{M}$ we get
   \begin{equation*}
 {\dbE}\int_0^{T+K} e^{\beta s}\Big(|\hat{Y}_{s}|^{2}  + s^{2H-1}|\hat{Z}_{s}|^{2} \Big)ds
\leq \frac{1}{2} \mathbb{E}\int_{0}^{T+K} e^{\beta s} \Big(|\hat{y}_s|^{2}
+ s^{2H-1} |\hat{z}_s|^{2} \Big) ds.
 \end{equation*}
Consequently,
$I$ is a contraction on $\widetilde{\mathcal{V}}_{[0,T+K]} \times \widetilde{\mathcal{V}}^{H}_{[0,T+K]}$.
It follows by the fixed point theorem that BSDE (\ref{31}) has a unique solution
 in $\widetilde{\mathcal{V}}_{[0,T+K]} \times \widetilde{\mathcal{V}}^{H}_{[0,T+K]}$.
\end{proof}

\begin{remark}
Now, we make a comparison between the above two approaches.
It is easy to see that (H2) is weaker than (H3).
So from the point of view of conditions, the first approach is better than the second one.
On the other hand, thanks to the concise proof, the second approach is convenient than the first one.
So from this point of view, the second approach is better.
\end{remark}

\section{Comparison theorem}

In this section, we study a comparison theorem of MF-ABSDEs of the following form:
\bel{41}\left\{\ba{ll}
\ds-dY_{t}= \dbE'[f(t,\eta_{t},Y_{t},Z_{t},Y'_{t+\delta(t)})] dt - Z_{t} dB_{t}^{H}, \qq t\in [0,T]; \\
\ns\ds Y_{t}=g(\eta_{t}),  \qq t\in[T,T+K].
\ea\right.\ee
Under (H1) and (H3), it is easy to know that the above equation admits a unique solution.
Here, not (H2), we use (H3) because it is more convenient for the proof of the following comparison theorem.
\begin{theorem}\label{40}
For $i=1,2$, suppose $g_{i}$ satisfies (H1), and $f_{i}$ and $\partial_{\theta'}f_{i}$ satisfy (H3).
Moreover, assume
$f_1(t,x,y,z,\cdot)$ is increasing, i.e.,
$f_1(t,x,y,z,\theta'_{r})\leq f_1(t,x,y,z,\bar{\theta}'_{r})$,
if $\theta'_{r}\leq \bar{\theta}'_{r}$, $\theta'_{r},\bar{\theta}'_{r}\in L^{2}_{\mathcal{F}}(t,T+K;\mathbb{R})$, $r\in [t,T+K]$.
Then, if $g_{1}(x)\leq g_{2}(x)$ and $f_{1}(t,x,y,z,\theta')\leq f_{2}(t,x,y,z,\theta')$ for all $(t,x,y,z)\in[0,T]\times \mathbb{R}^{3}$, $\theta'_1\in L^{2}(\mathcal{F}_{r},\mathbb{R})$,
we have $Y_{1}(t)\leq Y_{2}(t)$ almost surely.
\end{theorem}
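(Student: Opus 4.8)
The plan is to run the classical $L^{2}$-comparison argument for BSDEs, adapted to the fractional It\^o calculus. First I would linearize the difference $\hat Y_t:=Y_1(t)-Y_2(t)$, $\hat Z_t:=Z_1(t)-Z_2(t)$: subtracting the two copies of \eqref{41} and inserting the intermediate values $f_1\big(t,\eta_t,Y_2(t),Z_2(t),Y_1'(t+\delta(t))\big)$ and $f_1\big(t,\eta_t,Y_2(t),Z_2(t),Y_2'(t+\delta(t))\big)$, assumption (H3) for $f_1$ gives that $(\hat Y,\hat Z)$ solves a linear anticipated fractional BSDE
\begin{equation*}
-d\hat Y_t=\big(a_t\hat Y_t+b_t\hat Z_t+\dbE'[c_t\,\hat Y'_{t+\delta(t)}]-\psi_t\big)\,dt-\hat Z_t\,dB_t^{H},\qquad t\in[0,T],
\end{equation*}
with $\hat Y_t=g_1(\eta_t)-g_2(\eta_t)\le0$ on $[T,T+K]$, where $|a_t|\le C$, $|b_t|\le Ct^{H-1/2}$ and $\psi_t\ge0$ (because $f_1\le f_2$). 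The decisive point is $c_t\ge0$: it is the integral over $\lambda\in[0,1]$ of $\partial_{\theta'}f_1$ evaluated along the segment joining $Y_2'(t+\delta(t))$ to $Y_1'(t+\delta(t))$, and $\partial_{\theta'}f_1\ge0$ because $f_1(t,x,y,z,\cdot)$ is nondecreasing; the assumption that $\partial_{\theta'}f_1$ also satisfies (H3) makes $c_t$ a well-defined, suitably integrable coefficient.

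Next I would apply the fractional It\^o formula (Proposition \ref{4}) to $e^{\beta t}\phi_\varepsilon(\hat Y_t)$, where $\phi_\varepsilon\in C^{2}$ is a convex, nondecreasing smooth approximation of $x\mapsto(x^{+})^{2}$. This is legitimate because the solutions enjoy the Markovian representation $Y_i(t)=\varphi_i(t,\eta_t)$ (cf. Hu and Peng \cite{Hu}), so $\hat Y_t=(\varphi_1-\varphi_2)(t,\eta_t)$, $\hat Z_t=\sigma_t\,\partial_x(\varphi_1-\varphi_2)(t,\eta_t)$, and $\mathbb{D}^{H}_s\hat Y_s=\tfrac{\hat\sigma_s}{\sigma_s}\hat Z_s$ with $\tfrac1M s^{2H-1}\le\tfrac{\hat\sigma_s}{\sigma_s}\le Ms^{2H-1}$. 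After substituting the dynamics of $\hat Y$, taking expectations — the $dB^{H}$-integral has zero mean by Proposition \ref{2} — and letting $\varepsilon\downarrow0$ (using $\hat Y_T^{+}=0$, $\phi_\varepsilon'\to2(\cdot)^{+}$, $\phi_\varepsilon''\to2\,\mathbf 1_{(0,\infty)}$), one obtains
\begin{align*}
&e^{\beta t}\dbE(\hat Y_t^{+})^{2}+\beta\,\dbE\!\int_t^T\! e^{\beta s}(\hat Y_s^{+})^{2}\,ds+2\,\dbE\!\int_t^T\! e^{\beta s}\mathbf 1_{\{\hat Y_s>0\}}\tfrac{\hat\sigma_s}{\sigma_s}\hat Z_s^{2}\,ds\\
&\qquad\qquad=2\,\dbE\!\int_t^T\! e^{\beta s}\hat Y_s^{+}\big(a_s\hat Y_s+b_s\hat Z_s+\dbE'[c_s\hat Y'_{s+\delta(s)}]-\psi_s\big)\,ds.
\end{align*}
Then I would estimate the right-hand side term by term: $a_s$ by $C(\hat Y_s^{+})^{2}$; the $-\psi_s$-term is $\le0$; the cross term $2\hat Y_s^{+}b_s\hat Z_s\mathbf 1_{\{\hat Y_s>0\}}$ by Young's inequality against the correction term on the left (using $b_s^{2}\sigma_s/\hat\sigma_s\le C^{2}M$); and the anticipated term — using $c_s\ge0$ — by Young, Jensen, Fubini together with $\bar\dbE[\cdot]=\dbE[\dbE'[\cdot]]$, condition (ii), and the fact that $(\hat Y^{+})^{2}\equiv0$ on $[T,T+K]$ (since $g_1\le g_2$). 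After absorbing the $\hat Z$-terms this yields $e^{\beta t}\dbE(\hat Y_t^{+})^{2}+(\beta-\kappa)\dbE\int_t^T e^{\beta s}(\hat Y_s^{+})^{2}\,ds\le0$ for a constant $\kappa$ depending on the data; choosing $\beta>\kappa$ forces $\dbE(\hat Y_t^{+})^{2}=0$, i.e. $Y_1(t)\le Y_2(t)$ a.s. for every $t\in[0,T]$ (and trivially on $[T,T+K]$).

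The main difficulty is twofold. First, making the It\^o step rigorous: one must check that $e^{\beta s}\phi_\varepsilon'(\hat Y_s)\,\partial_x(\varphi_1-\varphi_2)(s,\eta_s)\,\sigma_s\in\mathbb{L}^{1,2}_{H}$ so that Proposition \ref{2} applies, justify the passage $\varepsilon\downarrow0$ by dominated convergence (polynomial growth of the $\varphi_i$ and their derivatives, plus all moments of $\eta$), and confirm the Markovian form of the solutions. Second — and this is exactly why (H3), not the weaker (H2), is imposed here — one must absorb the drift term $b_t\hat Z_t$ using the It\^o correction $\tfrac{\hat\sigma_s}{\sigma_s}\hat Z_s^{2}\asymp s^{2H-1}\hat Z_s^{2}$, which is possible only when $|b_t|\lesssim t^{H-1/2}$, i.e. when the Lipschitz constant of $f$ in the $z$-variables carries the weight $t^{H-1/2}$ as in (H3). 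The remaining ingredients — the linearization bounds, $c_s\ge0$, the use of conditions (i)--(ii), and the final Gronwall-type step — are routine and parallel the arguments of Section 3.
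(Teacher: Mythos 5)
Your strategy --- linearize the difference $(\hat Y,\hat Z)=(Y_1-Y_2,\,Z_1-Z_2)$ and run the $(\hat Y^{+})^{2}$ energy estimate --- is the classical route in the Brownian setting, and it is genuinely different from what the paper does. The paper freezes the anticipated argument, invokes the comparison theorem for \emph{non-anticipated} fractional BSDEs (Theorem 12.3 of Hu, Ocone and Song \cite{Hu1}) as a black box, builds a decreasing sequence $Y_2=\widetilde Y_0\ge\widetilde Y_1\ge\widetilde Y_2\ge\cdots$ using the monotonicity of $f_1$ in $\theta'$ and $f_1\le f_2$, $g_1\le g_2$, shows the sequence is Cauchy via the contraction estimate of Section 3, and identifies its limit with $Y_1$ by uniqueness (Theorem \ref{th}). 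Your handling of the anticipated term (sign of $c_t$, Young/Jensen/Fubini, condition (ii), vanishing of $(\hat Y^{+})^{2}$ on $[T,T+K]$) and of the $b_t\hat Z_t$ term through the weight $t^{H-1/2}$ in (H3) is correct and would close the argument \emph{if} the It\^o identity you write down were available.

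The genuine gap is exactly the step you set aside as a technicality: the It\^o formula for $e^{\beta t}\phi_\varepsilon(\hat Y_t)$. In the divergence-integral framework of this paper there is no It\^o--Tanaka formula for a general convex function of a BSDE solution: Proposition \ref{4} applies only to $F(t,X_t)$ with \emph{deterministic} integrands, and Proposition \ref{3} only to products of two such processes --- this is what produces the correction $\mathbb{D}^H_s\hat Y_s\,\hat Z_s=\frac{\hat\sigma_s}{\sigma_s}\hat Z_s^{2}$ for the quadratic functional $\hat Y_t^{2}$ in Section 3, and it does not extend to $\phi_\varepsilon$. To obtain your identity one must pass through the Markovian representation $\hat Y_t=u(t,\eta_t)$, $\hat Z_t=\sigma_t\partial_x u(t,\eta_t)$ with $u\in C^{1,2}$ and use the associated (nonlocal, mean-field, anticipated) PDE to generate the term $\phi_\varepsilon''(\hat Y_s)\frac{\hat\sigma_s}{\sigma_s}\hat Z_s^{2}$; but solutions of (\ref{41}) live only in the completions $\widetilde{\mathcal{V}}_{[0,T+K]}\times\widetilde{\mathcal{V}}^{H}_{[0,T+K]}$, no such PDE is established in the paper, and controlling the correction term along an approximating sequence of smooth $\varphi$'s is precisely the hard content of the Hu--Ocone--Song comparison theorem that the paper imports rather than reproves. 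As written, your proof in effect assumes (a strengthening of) the non-anticipated comparison result it is meant to extend. A repair is either to supply this It\^o/approximation lemma in full, or to follow the paper's route: reduce the anticipated equation to a monotone sequence of non-anticipated ones and apply \cite{Hu1} at each stage.
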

\begin{proof}
For $i=1,2$, we define $f_{i}^{\theta'}(s,x,y,z) \triangleq \dbE'[f_{i}(s,x,y,z,\theta'_{s+\delta(s)})]$.
By virtue of Remark \ref{36}, we see that $f^{\theta'}_{i}$ and $\partial_{\theta'}f^{\theta'}_{i}$ satisfy (H3).
In addition,  $f^{\theta'}_{1}$ is increasing in $\theta'$ and $f^{\theta'}_{1}\leq f^{\theta'}_{2}$.

\ms

Let $\widetilde{Y}_{0}(\cdot)=Y_{2}(\cdot)$. We consider the following BSDE,
$$\left\{\ba{ll}
\ds \widetilde{Y}_{1}(t)=g_{1}(\eta_{T}) + \int_t^T \dbE'[f_{1}(s,\eta_{s},\widetilde{Y}_{1}(s),\widetilde{Z}_{1}(s),\widetilde{Y}'_{0}(s+\delta(s)))] ds -\int_t^T \widetilde{Z}_{1}(s) dB_{s}^{H}, \qq t\in[0,T];\\
\ns\ds Y_{1}(t)=g_1(\eta_{t}),\qq t\in[T,T+K].
\ea\right.$$
By Theorem \ref{th}, the above equation admits a unique solution
$(\widetilde{Y}_{1}(\cdot),\widetilde{Z}_{1}(\cdot)) \in \widetilde{\mathcal{V}}_{[0,T+K]} \times \widetilde{\mathcal{V}}^{H}_{[0,T]}$.
Now based on the assumptions, we have
\begin{equation*}
  \begin{cases}
   f_{1}^{\widetilde{Y}'_{0}}(t,x,y,z) \leq  f_{2}^{\widetilde{Y}'_{0}}(t,x,y,z), \qq \forall (t,x,y,z)\in [0,T]\times \mathbb{R}^{3};\\
   g_{1}(x)\leq g_{2}(x), \qq \forall x\in \mathbb{R}.
  \end{cases}
\end{equation*}
So from Theorem 12.3 of Hu et al. \cite{Hu1}, we deduce
\begin{equation*}
  \widetilde{Y}_{1}(t)\leq \widetilde{Y}_{0}(t)=Y_{2}(t), \ \ a.s.
\end{equation*}
Next, we consider the following BSDE,
$$\left\{\ba{ll}
\ds \widetilde{Y}_{2}(t)=g_{1}(\eta_{T}) + \int_t^T \dbE'[f_{1}(s,\eta_{s},\widetilde{Y}_{2}(s),\widetilde{Z}_{2}(s),\widetilde{Y}'_{1}(s+\delta(s)))] ds -\int_t^T \widetilde{Z}_{2}(s) dB_{s}^{H}, \qq t\in[0,T];\\
\ns\ds Y_{2}(t)=g_1(\eta_{t}),  \qq t\in[T,T+K].
\ea\right.$$
 And denote by $(\widetilde{Y}_{2}(\cdot),\widetilde{Z}_{2}(\cdot)) \in \widetilde{\mathcal{V}}_{[0,T+K]} \times \widetilde{\mathcal{V}}^{H}_{[0,T]}$ the unique solution of the above equation.
Thanks to that $f^{\theta'}_{1}$ is increasing in $\theta'$, one has
\begin{equation*}
f_{1}^{\widetilde{Y}'_{1}}(t,x,y,z) \leq  f_{1}^{\widetilde{Y}'_{0}}(t,x,y,z), \qq \forall (t,x,y,z)\in [0,T]\times \mathbb{R}^{3}.
\end{equation*}
Therefore, similar to the above discussion we deduce
\begin{equation*}
   \widetilde{Y}_{2}(t)\leq \widetilde{Y}_{1}(t), \ \ a.s.
\end{equation*}
By induction, one can construct a sequence
$\{(\widetilde{Y}_{n}(\cdot),\widetilde{Z}_{n}(\cdot))\}_{n\geq 1} \subseteq \widetilde{\mathcal{V}}_{[0,T+K]} \times \widetilde{\mathcal{V}}^{H}_{[0,T]}$
such that
$$\left\{\ba{ll}
\ds \widetilde{Y}_{n}(t)=g_{1}(\eta_{T}) + \int_t^T \dbE'[f_{1}(s,\eta_{s},\widetilde{Y}_{n}(s),\widetilde{Z}_{n}(s),\widetilde{Y}'_{n-1}(s+\delta(s)))] ds -\int_t^T \widetilde{Z}_{n}(s) dB_{s}^{H}, \ \ \ \ t\in[0,T];\\
\ns\ds Y_{n}(t)=g_1(\eta_{t}),  \qq t\in[T,T+K].
\ea\right.$$
Similarly, we obtain
\begin{equation*}
  Y_{2}(t)= \widetilde{Y}_{0}(t)\geq \widetilde{Y}_{1}(t)\geq \widetilde{Y}_{2}(t)\geq
   \cdots \geq \widetilde{Y}_{n}(t)\geq \cdots,  \ \ a.s.
\end{equation*}
In the following, we show $\{(\widetilde{Y}_{n}(\cdot),\widetilde{Z}_{n}(\cdot))\}_{n\geq 1}$ is a Cauchy sequence. Denote
$$\hat{Y}_{n}=\widetilde{Y}_{n}-\widetilde{Y}_{n-1},\qq \hat{Z}_{n}=\widetilde{Z}_{n}-\widetilde{Z}_{n-1},
\qq n\geq 4.$$
From the estimate (\ref{34}), we have
$$\ba{ll}
\ds \mathbb{E}\left(\frac{\beta}{2}\int_0^T e^{\beta s}|\hat{Y}_{n}(s)|^{2} ds
+\frac{2}{M}\int_0^T s^{2H-1}e^{\beta s}|\hat{Z}_{n}(s)|^{2} ds\right)\\
\ns\ds \leq \frac{2}{\beta}\mathbb{E}\int_0^T e^{\beta s}
\bigg(\dbE'[f_{1}(s,\eta_{s},\widetilde{Y}_{n}(s),\widetilde{Z}_{n}(s),\widetilde{Y}'_{n-1}(s+\delta(s)))]\\
\ns\ds\qq\qq\qq \ \ \ -\dbE'[f_{1}(s,\eta_{s},\widetilde{Y}_{n-1}(s),\widetilde{Z}_{n-1}(s),\widetilde{Y}'_{n-2}(s+\delta(s)))] \bigg)^{2} ds. \qq
\ea$$
Then combining (H3) and Jensen's inequality,  note that $\delta$ satisfying (i) and (ii), one has

$$\ba{ll}
\ds \mathbb{E}\left(\frac{\beta}{2}\int_0^T e^{\beta s}|\hat{Y}_{n}(s)|^{2} ds
+\frac{2}{M}\int_0^T s^{2H-1}e^{\beta s}|\hat{Z}_{n}(s)|^{2} ds\right)\\
\ns\ds \leq \frac{6C}{\beta}\mathbb{E} \int_0^T e^{\beta s}\big(|\hat{Y}_{n}(s)|^{2}+s^{2H-1}|\hat{Z}_{n}(s)|^{2}\big) ds
     +\frac{6CL}{\beta}\mathbb{E} \int_0^T e^{\beta s}|\hat{Y}_{n-1}(s)|^{2} ds \\
\ns\ds \leq\frac{6C(L+1)}{\beta}\mathbb{E} \int_0^T e^{\beta s}\big(|\hat{Y}_{n}(s)|^{2}+s^{2H-1}|\hat{Z}_{n}(s)|^{2}+|\hat{Y}_{n-1}(s)|^{2}\big) ds.
\ea$$
Now we choose $M>2$ and let $\beta=12CM(L+1) + \frac{4}{M}$, then
$$\ba{ll}
\ds \mathbb{E}\int_0^T e^{\beta s}\big(|\hat{Y}_{n}(s)|^{2} + s^{2H-1}|\hat{Z}_{n}(s)|^{2}\big) ds\\
\ns\ds \leq \frac{1}{4}\mathbb{E} \int_0^T e^{\beta s}\big(|\hat{Y}_{n}(s)|^{2}+s^{2H-1}|\hat{Z}_{n}(s)|^{2}+|\hat{Y}_{n-1}(s)|^{2}\big) ds.
\ea$$
Hence
$$\ba{ll}
\ds \mathbb{E}\int_0^T e^{\beta s}\big(|\hat{Y}_{n}(s)|^{2} + s^{2H-1}|\hat{Z}_{n}(s)|^{2}\big) ds\\
\ns\ds\leq  \frac{1}{3}\mathbb{E}\int_0^T e^{\beta s}|\hat{Y}_{n-1}(s)|^{2}  ds\\
\ns\ds \leq  \frac{1}{3}\mathbb{E}\int_0^T e^{\beta s}\big(|\hat{Y}_{n-1}(s)|^{2} + s^{2H-1}|\hat{Z}_{n-1}(s)|^{2}\big) ds.\qq\ \ \ \ \
\ea$$
So
\begin{equation*}
      \mathbb{E}\int_0^T e^{\beta s}(|\hat{Y}_{n}(s)|^{2} + s^{2H-1}|\hat{Z}_{n}(s)|^{2}) ds
\leq  (\frac{1}{3})^{n-4}\mathbb{E} \int_0^T e^{\beta s}(|\hat{Y}_{4}(s)|^{2} + s^{2H-1}|\hat{Z}_{4}(s)|^{2}) ds.
\end{equation*}
It follows that $(\hat{Y}_{n}(\cdot))_{n\geq 4}$ and $(\hat{Z}_{n}(\cdot))_{n\geq 4}$ are respectively Cauchy sequences in
$\widetilde{\mathcal{V}}_{[0,T+K]}$ and  $\widetilde{\mathcal{V}}_{[0,T]}^{H}$.
Denote their limits by $\widetilde{Y}_{\cdot}$ and $\widetilde{Z}_{\cdot}$, respectively.
From Theorem \ref{th}, we have $\widetilde{Y}(t)= Y_{1}(t), \ \ a.s.$,
which deduce that
\begin{equation*}
 Y_{1}(t)\leq Y_{2}(t), \ \ a.s.
\end{equation*}
Therefore, the desired result is obtained.
\end{proof}

\begin{example}
 Suppose we are facing with the following two MF-ABSDEs,
$$\left\{\ba{ll}
\ds Y_{1}(t)=g_{1}(\eta_{T})+\int_t^T[Y_{1}(s)+Z_{1}(s)+\dbE' Y'_{1}(s+\delta(s))-1]ds-\int_t^TZ_{1}(s)dB_{s}^{H};\\
\ns\ds Y_{1}(t) = g_{1}(\eta_{t}),\qq t\in[T,T+K],
\ea\right.$$
and
$$\left\{\ba{ll}
\ds Y_{2}(t)=g_{2}(\eta_{T})+\int_t^T[Y_{2}(s)+Z_{2}(s)+\dbE' Y'_{2}(s+\delta(s))+1]ds-\int_t^TZ_{2}(s)dB_{s}^{H};\\
\ns\ds Y_{2}(t) = g_{2}(\eta_{t}),\qq t\in[T,T+K],
\ea\right.$$
where $g_{1}$ and $g_{2}$ satisfy (H1) with $g_{1}(x)\leq g_{2}(x), \ \forall x\in \mathbb{R}$.
Then, according to Theorem \ref{40}, one has
\begin{equation*}
 Y_{1}(t)\leq Y_{2}(t), \ \ a.s.
\end{equation*}
\end{example}

\section{Optimal control problem}

Let $\delta>0.$ We want to control a process $X(t) = X^{u}(t)$ given by an
equation of the form:
\bel{eq:1}\left\{\ba{ll}
\ds dX(t)=b(t, \mathbb{P}_{X(t)},\mathbb{P}_{X(t-\delta)},\mathbb{P}_{u(t)}) dt +\sigma(t)dB^{H}_{t},\qq t\in[0,T];\\
\ns\ds X(t)=x_{0}(t),\qq t\in[-\delta,0].
\ea\right.\ee

The function $\sigma$ is assumed to be in $\mathcal{H}$, the integral with respect to $B^{H}$ is therefore understood in the Wiener sense. The function $b : [0,T] \times \mathcal{P}_{2}(\mathbb{R}
) \times \mathcal{P}_{2}(\mathbb{R}
) \times \mathcal{P}_{2}(\mathbb{R}
) \rightarrow \mathbb{R}$ is assumed to be deterministic in the sense that it's a function of $t$ and the laws of processes $X$ and $u$. The function $x_{0}$ is assumed to be continuous and deterministic. The set $\mathcal{U}\subset%
\mathbb{R}
$ consists of the admissible control values. The information available to the
controller is given by the filtration $\mathbb{F}$ (generated by the fBm $B^{H}$). The set of admissible controls, i.e., the strategies available
to the controller, is given by  $\mathcal{A}_{\mathbb{F}}$ the set of  $\mathcal{U}$-valued and $\mathbb{F}$-adapted square integrable processes. In this paper, we assume that $X$ exists and belongs to $L^{2}(\Omega \times [0,T])$. For recent works about fractional stochastic differential equation, we refer the reader to
Ferrante and  Rovira \cite{FR2006},  Buckdahn et al. \cite{Buckdahn2017}, Buckdahn and Jing \cite{Buckdahn3}, etc. For other examples of stochastic optimal control problems with delay driven by fBm, the reader may consult Agram, Douissi and Hilbert \cite{DHA}.

\ms

The performance functional is assumed to have the following form:
\bel{P}J(u)=\dbE\bigg[g(X(T),\mathbb{P}_{X(T)})+\int_{0}^{T}f(t,X(t),X(t-\delta),\mathbb{P}_{X(t)},\mathbb{P}_{X(t-\delta)},u(t))dt\bigg],\ee
where $f:\Omega\times\left[
0,T\right]  \times%
\mathbb{R}
^{2}\times\mathcal{P}_{2}(\mathbb{R})^{2}\times\mathcal{U}\rightarrow%
\mathbb{R}
$ and $g:\Omega  \times%
\mathbb{R}
\times\mathcal{P}_{2}(\mathbb{R})\rightarrow%
\mathbb{R}
$ are given processes, such that for all $ t \in [0,T]$, $f(.,t,x,\bar{x},m,\bar{m},u)$ is assumed to be $\mathcal{F}_{t}$-measurable for all $x,\bar{x} \in \mathbb{R}$, $m,\bar{m} \in \mathcal{P}_{2}(\mathbb{R})$,
$u\in\cU$. The process $g(.,x,m)$ is assumed to be $\mathcal{F}_{T}$-measurable for all $x \in \mathbb{R}$, $m \in \mathcal{P}_{2}(\mathbb{R})$.

\ms

We also assume the following integrability condition
\bel{PP}\dbE\bigg[\Big|g(X(T),\mathbb{P}_{X(T)})\Big|
+\int_{0}^{T}\Big|f(t,X(t),X(t-\delta),\mathbb{P}_{X(t)},\mathbb{P}_{X(t-\delta)},u(t))\Big|dt\bigg] < +\infty.\ee
The functions $b$, $f$ and $g$ are assumed to be continuously differentiable w.r.t $x, \bar{x},u$ with bounded
derivatives and admit Fr\'{e}chet bounded derivatives with respect to the probability measures as mentioned in the preliminaries.

\ms

The problem we consider in this section is the following:

\ms

\textbf{Problem:} Find a control $u^{*} \in \mathcal{A}_{\mathbb{F}}$ such that
\begin{equation}
\label{eq:optimal perf}
J(u^*)=\sup_{u\in \mathcal{A}_{\mathbb{F}}}J(u).
\end{equation}
Any control $u^{*} \in \mathcal{A}_{\mathbb{F}}$ satisfying (\ref{eq:optimal perf}) is called an optimal control.

\ms

The Hamiltonian associated to our problem is defined by%
$$
H:\Omega\times\left[  0,T\right]  \times%
\mathbb{R}
\times
\mathbb{R}\times \mathcal{U}\times \mathcal{P}_{2}(\mathbb{R})\times \mathcal{P}_{2}(\mathbb{R})\times \mathcal{P}_{2}(\mathbb{R})\times%
\mathbb{R}
\times%
\mathbb{R}
\rightarrow%
\mathbb{R}
$$
with%
\begin{equation}%
\begin{array}
[c]{ll}%
H(t,x,\overline{x},u,m_{1},m_{2},m_{3},y,z)  = f(t,x,\overline
{x},m_{1},{m}_{2},u)+y \times b(t,m_{1},m_{2},m_{3}) +%
z \times \sigma(t).
\end{array}
\label{ham}%
\end{equation}

For $u\in\mathcal{A}_{\mathbb{F}}$ with corresponding solution $X=X^{u}$,
define, whenever solutions exist, $(Y,Z) \triangleq (Y^{u},Z^{u})$, by the adjoint equation, in terms of the Hamiltonian, as follows:
\bel{eq:2}\left\{\ba{ll}
\ds dY(t) = -\{ \partial_{x}H(t)+ {\dbE}[\partial_{\bar{x}}H(t+\delta)\chi_{[0,T-\delta]}(t)|\mathcal{F}_{t}] + {{\dbE}^{\prime}}[\partial_{m_{1}}{H}^{\prime}(t)(X(t))] \\
\ns\ds\qq\qq +  {{\dbE}[{\dbE}^{\prime}}[\partial_{{m}_{2}}{H}^{\prime}(t+\delta)(X(t)) \chi_{[0,T-\delta]}(t)]|\mathcal{F}_{t}]\}dt + Z(t)dB^{H}%
(t),\qq t\in[0,T],\\
\ns\ds Y(T)=\partial_{x}g(T)+{{\dbE}^{\prime}}[\partial_{m} {g}^{\prime}(T)({X}(T))].
\ea\right.\ee
Note that we have used the following notations:
$$\ba{ll}
\ds H(t)\triangleq H(t,X(t),X(t-\delta),u(t), \mathbb{P}_{X(t)}, \mathbb{P}_{X(t-\delta)},\mathbb{P}_{u(t)},Y(t),Z(t)), \\
\ns\ds {H}^{\prime}(t)\triangleq H(t, {X}^{\prime}(t), {X}^{\prime}(t-\delta), {u}^{\prime}(t), \mathbb{P}_{X(t)}, \mathbb{P}_{X(t-\delta)}, \mathbb{P}_{u(t)} , {Y}^{\prime}(t), {Z}^{\prime}(t)),\\
\ns\ds g(T)\triangleq g(X(T),\mathbb{P}_{X(T)}), \qq {g}^{\prime}(T)\triangleq g({X}^{\prime}(T),\mathbb{P}_{X(T)}).
\ea$$
\begin{remark}
\begin{enumerate}
	\item Note that according to the definition of the differentiability of functions of measures and Remark \ref{Rem}, the terminal value $Y(T)$ is a measurable function of $X(T)$ and the value of $Z(T)$ follows from the Clark-Ocone formula, see \cite{BOZ}.
	\item In the coming example, we illustrate how to solve a special kind of BSDE (\ref{eq:2}), the resolution proposed is done on time intervals using the results we obtained in the previous sections concerning the existence and uniqueness of MF-ABSDE (\ref{31}) when the constant $K$ is equal to zero.

\end{enumerate}
\end{remark}
\subsection{Sufficient maximum principle}

In this section, we prove sufficient stochastic maximum principle.
\begin{theorem}\label{Suff} Let $u^{\ast}%
\in\mathcal{A}_{\mathbb{F}}$, with corresponding controlled state process $X_{*}\triangleq X^{u^{*}}$. Suppose
that there exists $(Y_{*}(t)$, $Z_{*}(t))$, the solution
of the associated adjoint equation $\left(\ref{eq:2}\right)$. Assume the following:

\begin{enumerate}
 \item $(X^{u}(t)Z_{*}(t)) \in dom(\delta^{H})$ \text{ \ } $\forall
  u \in \mathcal{A}_{\mathbb{F}}$.
\item (Concavity) The functions
$$
\begin{array}
[c]{ll}%
(x,\bar{x},u,m_{1},m_{2},m_{3}) & \mapsto H(t,x,\bar{x},u,m_{1},m_{2},m_{3},Y_{*}(t),Z_{*}(t)) ,\\
(x,m) & \mapsto g(x,m)\text{,}%
\end{array}
$$
are concave  for each $t \in [0,T]$ almost surely.

\ms

Moreover, the control $u^{*}$ satisfies the following conditions:
\item (Maximum condition)
\begin{center}\label{maxQ1}
$H(t, X_{*}(t),X_{*}(t-\delta),u^{*}(t),\mathbb{P}_{X_{*}(t)}, \mathbb{P}_{X_{*}(t-\delta)}, \mathbb{P}_{u^{*}(t)},Y_{*}(t),Z_{*}(t))
= \hspace{2cm}\underset{u\in \mathcal{U}}{\text{ }\sup} { \ } H(t, X_{*}(t),X_{*}(t-\delta),u, \mathbb{P}_{X_{*}(t)},\mathbb{P}_{X_{*}(t-\delta)},\mathbb{P}_{u^{*}(t)}, Y_{*}(t),Z_{*}(t))$,
\end{center}
for all $t \in [0,T]$ almost surely.
\item $\partial_{m_{3}} H(t,X_{*}(t),X_{*}(t-\delta),u^{*}(t), \mathbb{P}_{X_{*}(t)}, \mathbb{P}_{X_{*}(t-\delta)}, \mathbb{P}_{u^{*}(t)},Y_{*}(t),Z_{*}(t))(u^{*}(t)) = 0$,\\
\\
for all $t \in [0,T]$ almost surely.
\end{enumerate}

Then $(u^{\ast},X_{\ast})$ is an optimal couple for our problem.
\end{theorem}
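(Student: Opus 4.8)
The plan is to run the classical verification (sufficient) maximum principle argument, adapted to the fractional, delayed and mean-field setting. Fix an arbitrary admissible control $u\in\mathcal{A}_{\mathbb{F}}$ with state $X^{u}$, and set $\Delta X(t)=X^{u}(t)-X_{*}(t)$ and $\Delta b(t)=b(t,\mathbb{P}_{X^{u}(t)},\mathbb{P}_{X^{u}(t-\delta)},\mathbb{P}_{u(t)})-b(t,\mathbb{P}_{X_{*}(t)},\mathbb{P}_{X_{*}(t-\delta)},\mathbb{P}_{u^{*}(t)})$; the goal is to show $J(u)-J(u^{*})\le 0$. First I would split $J(u)-J(u^{*})$ into the terminal contribution $\dbE[g(X^{u}(T),\mathbb{P}_{X^{u}(T)})-g(X_{*}(T),\mathbb{P}_{X_{*}(T)})]$ and the running contribution $\dbE\int_{0}^{T}[f(t,X^{u}(t),X^{u}(t-\delta),\mathbb{P}_{X^{u}(t)},\mathbb{P}_{X^{u}(t-\delta)},u(t))-f(t,X_{*}(t),\dots,u^{*}(t))]\,dt$. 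For the terminal contribution I would use the concavity of $g$ (hypothesis~2) together with the terminal condition $Y_{*}(T)=\partial_{x}g(T)+\dbE'[\partial_{m}g'(T)(X_{*}(T))]$ of the adjoint BSDE \rf{eq:2}; after a Fubini exchange between the two factors of $\bar\Omega$ this yields $\dbE[g(X^{u}(T),\mathbb{P}_{X^{u}(T)})-g(X_{*}(T),\mathbb{P}_{X_{*}(T)})]\le \dbE[Y_{*}(T)\Delta X(T)]$.

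Next I would expand $\dbE[Y_{*}(T)\Delta X(T)]$. A useful simplification is that, because $\sigma$ is deterministic and control-independent, $\Delta X(t)=\int_{0}^{t}\Delta b(s)\,ds$ is itself deterministic and carries no $dB^{H}$-part; hence the integration-by-parts/product formula of Proposition~\rf{3} applied to $Y_{*}(t)\Delta X(t)$ has no Malliavin cross term, and $\dbE\int_{0}^{T}\Delta X(t)Z_{*}(t)\,dB^{H}_{t}=0$ by Proposition~\rf{2} — this is where hypothesis~1, $X^{u}Z_{*}\in dom(\delta^{H})$, enters (applied to both $u$ and $u^{*}$). Substituting the adjoint dynamics \rf{eq:2} gives $\dbE[Y_{*}(T)\Delta X(T)]=\dbE\int_{0}^{T}Y_{*}(t)\Delta b(t)\,dt-\dbE\int_{0}^{T}\Delta X(t)D_{t}\,dt$, where $D_{t}$ is the drift coefficient displayed in \rf{eq:2}. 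On the other hand, rewriting the running cost through the Hamiltonian \rf{ham} (i.e. $f=H-y\,b-z\,\sigma$) shows that the running contribution equals $\dbE\int_{0}^{T}[H^{u}(t)-H_{*}(t)]\,dt-\dbE\int_{0}^{T}Y_{*}(t)\Delta b(t)\,dt$, where $H^{u}(t)$ and $H_{*}(t)$ denote the Hamiltonian along $u$ and $u^{*}$ respectively, both carrying the adjoint pair $(Y_{*},Z_{*})$. Adding the two contributions, the $Y_{*}\Delta b$ terms cancel and I obtain $J(u)-J(u^{*})\le \dbE\int_{0}^{T}\{H^{u}(t)-H_{*}(t)-\Delta X(t)D_{t}\}\,dt$.

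Finally I would estimate this last integrand by the joint concavity of $(x,\bar x,u,m_{1},m_{2},m_{3})\mapsto H(t,\cdot\,,Y_{*}(t),Z_{*}(t))$ (hypothesis~2): this bounds $H^{u}(t)-H_{*}(t)$ from above by $\partial_{x}H_{*}(t)\Delta X(t)+\partial_{\bar x}H_{*}(t)\Delta X(t-\delta)+\partial_{u}H_{*}(t)(u(t)-u^{*}(t))$ plus three measure-derivative terms. Using Fubini on $\bar\Omega$ together with the change of variable $t\mapsto t+\delta$ and the fact that $\Delta X\equiv 0$ on $[-\delta,0]$ (so the boundary contributions at $t=T-\delta$ and $t=0$ are harmless), the $\partial_{\bar x}$, $\partial_{m_{1}}$ and $\partial_{m_{2}}$ terms match, with opposite sign, the corresponding pieces of $\Delta X(t)D_{t}$ coming from the drift of \rf{eq:2}, and cancel. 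What remains is $\dbE\int_{0}^{T}\{\partial_{u}H_{*}(t)(u(t)-u^{*}(t))+\dbE'[\partial_{m_{3}}H'_{*}(t)(u^{*}(t))(u'(t)-{u^{*}}'(t))]\}\,dt$: the first summand is $\le 0$ by the maximum condition (hypothesis~3), and the second vanishes by hypothesis~5, $\partial_{m_{3}}H_{*}(t)(u^{*}(t))=0$. Hence $J(u)-J(u^{*})\le 0$, so $(u^{*},X_{*})$ is optimal. I expect the main obstacle to be precisely this last bookkeeping — keeping rigorous track of which copy of $\Omega$ each expectation refers to, correctly matching the Fr\'echet/measure-derivative terms produced by the concavity inequalities against the $\partial_{m_{1}},\partial_{m_{2}}$ terms written in \rf{eq:2}, handling the delay time-shifts, and checking that hypothesis~5 indeed annihilates the leftover $\partial_{m_{3}}H$-contribution; verifying the integrability needed to invoke Propositions~\rf{2} and~\rf{3} is a secondary, routine point.
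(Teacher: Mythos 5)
Your proposal follows essentially the same route as the paper's proof: split $J(u)-J(u^{*})$ into terminal and running parts, bound the terminal part by $\dbE[Y_{*}(T)\bar{X}(T)]$ via concavity of $g$ and Fubini, integrate by parts against the adjoint equation (using that $\bar{X}$ is deterministic with no $dB^{H}$-part and hypothesis~1 to kill the stochastic integral), rewrite the running cost through the Hamiltonian so the $Y_{*}\bar{b}$ terms cancel, shift the delayed terms by $t\mapsto t+\delta$ using $\bar{X}\equiv 0$ on $[-\delta,0]$, and conclude by joint concavity of $H$ together with the maximum condition and the vanishing of $\partial_{m_{3}}H_{*}$ (which you mislabel as ``hypothesis~5'' but is clearly condition~4). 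The argument is correct and matches the paper's.
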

\begin{remark}
The above condition 4 means that the Fr\'{e}chet derivative of $H$ with respect to the law of the control in $u^{*}$ vanishes.
\end{remark}
\begin{proof}
Let $u\in\mathcal{A}_{\mathbb{F}}$ be a generic admissible control. By the definition of the performance functional $J$ given by
$\left(  \ref{P}\right)  $, we have
\begin{equation}%
\begin{array}
[c]{lll}%
J(u)-J(u^{\ast}) & = & A_{2} + A_{3},
\end{array}
\label{J2}%
\end{equation}
where
$$\ba{ll}
\ds A_2\triangleq \mathbb{E}\Big[\int_{0}^{T}[f(t)-f_{\ast}(t)]dt\Big],\\
\ns\ds A_3\triangleq \mathbb{E}\Big[g(T)-g_{\ast}(T)\Big].
\ea$$
Applying the definition of Hamiltonian $\left(  \ref{ham}\right)  $, we
have%
\bel{j1}A_{2}=\dbE\Big[\int_{0}^{T}\Big(H(t)-H_{\ast}(t)-Y_{*}(t)\bar{b}(t) \Big) dt\Big] ,\ee
where we used the following notations
$$%
\begin{array}
[c]{ll}%
b(t) & \triangleq b(t, \mathbb{P}_{X(t)},\mathbb{P}_{X(t-\delta)},\mathbb{P}_{u(t)}),\qq b_{*}(t)  \triangleq b(t,{P}_{X_{*}(t)},\mathbb{P}_{X_{*}(t-\delta)}, \mathbb{P}_{u^{*}(t)}),\\
f(t) &\triangleq f(t,X(t),X(t-\delta),\mathbb{P}_{X(t)},\mathbb{P}_{X(t-\delta)},u(t)), \\
f_{*}(t) & \triangleq f(t,X_{*}(t),X_{*}(t-\delta),\mathbb{P}_{X_{*}(t)},\mathbb{P}_{X_{*}(t-\delta)},u^{*}(t)), \\
g(T) & \triangleq g(X(T), \mathbb{P}_{X(T)}), \qq  g_{*}(T)  \triangleq g(X_{*}(T),\mathbb{P}_{X_{*}(T)}),\\
H(t) & \triangleq H(t,X(t),X(t-\delta),u(t),\mathbb{P}_{X(t)},\mathbb{P}_{X(t-\delta)}, \mathbb{P}_{u(t)},Y_{*}(t),Z_{*}(t)), \\
H_{*}(t) & \triangleq H(t,X_{*}(t),X_{*}(t-\delta),u^{*}(t),\mathbb{P}_{X_{*}(t)},\mathbb{P}_{X_{*}(t-\delta)},\mathbb{P}_{u^{*}(t)},Y_{*}(t),Z_{*}(t)), \\
\bar{b}(t) & \triangleq  b(t)-b_{\ast}(t),\qq
\bar{X}(t)  \triangleq X(t)- X_{\ast}(t).
\end{array}
$$
\begin{remark}\label{rem}
Notice that since $\sigma$ is a function of $t$ and therefore it is not related to the process $X$, we have $d\bar{X}({t}) = \bar{b}({t})dt$.
\end{remark}
Now using the concavity of $g$ and the terminal value of BSDE (\ref{eq:2}) associated to  $(u^{*},X_{*})$, we get by Fubini's theorem
$$\ba{ll}
\ds A_{3} \leq {\dbE}[\partial_{x}g_{*}(T)\bar{X}(T)]  + {\dbE}[{{\dbE}^{\prime}}[\partial_{m}g_{*}(T)({X}^{\prime}_{\ast}(T))\bar{X}^{\prime}(T)]]\\
\ns\ds\ \ \ \ = {\dbE}[(\partial_{x}g_{*}(T)
+ {{\dbE}^{\prime}}[\partial_{m}{g}^{\prime}_{*}(T)({X}_{\ast}(T))])\bar{X}(T)] \\
\ns\ds \ \ \ \ = {\dbE}[Y_{*}(T)\bar{X}(T)].
\ea$$
Applying the integration by parts formula (Proposition \ref{3}) to $\bar{X}(t)$ and $Y_{*}(t)$, we get
\begin{equation*}
d(Y_{*}(t)\bar{X}(t))  = Y_{*}(t)d\bar{X}(t) + \bar{X}(t)dY_{*}(t).
\end{equation*}
The equality comes from Remark \ref{rem} and the fact that  $\mathbb{D}^{H}_{t}\bar{X}({t})=0$ because $\bar{X}(t) =  \int_{0}^{t} \bar{b}(s) ds$ and $\bar{b}$ is deterministic.
Hence, integrating from $0$ to $T$, taking the expectation and using the first assumption, we get
\bel{PXprim}\ba{ll}
\ds {\dbE}[Y_{*}(T)\bar{X}(T)] = {\dbE}[\int_{0}^{T} Y_{*}(t)d\bar{X}(t)]
+ {\dbE}[\int_{0}^{T}\bar{X}(t)dY_{*}(t)]  \\
\ns\ds = {\dbE}[\int_{0}^{T}Y_{*}(t)\bar{b}(t)dt] - {\dbE}[\int_{0}^{T} \bar{X}(t)\{\partial_{x}{H}_{*}(t)  +
 \partial_{\bar{x}}H_{*}(t+\delta) \chi_{[0,T-\delta]}(t) \\
\ns\ds \ \ \ +  {{\dbE}^{\prime}}[\partial_{{m}_{1}}{H}^{\prime}_{*}(t)({X}_{*}(t))] +  {\dbE}^{\prime}[\partial_{{m}_{2}} {H}^{\prime}_{*}(t+\delta)({X}_{*}(t))]\chi_{[0,T-\delta]}(t)\}dt]  \\
\ns\ds = {\dbE}[\int_{0}^{T}Y_{*}(t)\bar{b}(t)dt]- {\dbE}[\int_{0}^{T} \bar{X}(t)\partial_{x}H_{*}(t) dt] -
{\dbE}[\int_{0}^{T} \partial_{\bar{x}}H_{*}(t) \bar{X}(t-\delta) dt] \\
\ns\ds \ \ \ - {\dbE}[\int_{0}^{T} {{\dbE}^{\prime}}[\partial_{m_{1}}H_{*}(t)({X}^{\prime}_{*}(t))\bar{X}^{\prime}(t)]dt]- {\dbE}[\int_{0}^{T} {{\dbE}^{\prime}}[\partial_{{m}_{2}}H_{*}(t)({X}^{\prime}_{*}(t-\delta))\bar{X}^{\prime}(t-\delta)]dt].
\ea\ee
To obtain the last equality, we did the following change of variables  $r = t + \delta$ to get
\begin{equation*}
{\dbE}[\int_{0}^{T-\delta} \bar{X}(t) \partial_{\bar{x}}H_{*}(t+\delta)dt]  = {\dbE}[\int_{\delta}^{T} \bar{X}(r-\delta) \partial_{\bar{x}}H_{*}(r)dr]
= {\dbE}[\int_{0}^{T} \bar{X}(r-\delta) \partial_{\bar{x}}H_{*}(r)dr],
\end{equation*}
where we used that ${\dbE}[\int_{0}^{\delta} \bar{X}(r-\delta) \partial_{\bar{x}}H_{*}(r)dr] =  {\dbE}[\int_{- \delta}^{0} \bar{X}(u) \partial_{\bar{x}}H_{*}(u+\delta)du] = 0$, since $\bar{X}(u) = 0$ for all $u \in [-\delta,0]$, because $X_{*}(t) = X(t) = x_{0}(t)$ for all $t  \in [-\delta,0]$. \

\ms

Similarly, we get using the previous argument and by Fubini's theorem
\begin{equation*}
{\dbE}[\int_{0}^{T} \bar{X}(t) {{\dbE}^{\prime}}[\partial_{{m}_{2}} {H}^{\prime}_{*}(t+\delta)(X_{*}(t))]\chi_{[0,T-\delta]}(t)dt] = {\dbE}[\int_{0}^{T} {{\dbE}^{\prime}}[\partial_{{m}_{2}}{H}_{*}(t)({X}^{\prime}_{*}(t-\delta))\bar{X}^{\prime}(t-\delta)]dt].
\end{equation*}
By (\ref{J2}), (\ref{j1}) and (\ref{PXprim}), we obtain
$$\ba{ll}
\ds J(u)-J(u^{*}) \leq {\dbE}[\int_{0}^{T} (H(t)-H_{*}(t))dt] - {\dbE}[\int_{0}^{T} \partial_{x}H_{*}(t) \bar{X}(t)dt] - {\dbE}[\int_{0}^{T} \partial_{\bar{x}}H_{*}(t) \bar{X}(t-\delta)dt]\\
\ns\ds\qq\qq\qq \ \ \ -
{\dbE}[\int_{0}^{T}
{{\dbE}^{\prime}}[\partial_{{m}_{1}}H_{*}(t)({X}^{\prime}_{*}(t))\bar{X}^{\prime}(t)]dt]-
{\dbE}\int_{0}^{T} {{\dbE}^{\prime}}[\partial_{{m}_{2}}H_{*}(t)({X}^{\prime}_{*}(t-\delta))\bar{X}^{\prime}(t-\delta)]dt \\
\ns\ds\qq\qq\qq \leq 0.
\ea$$
Due to the concavity assumption on $H$ and because $u^{*}$ satisfies the maximum condition 3 and 4. the first order derivative of $H$ in $u^{*}$ and the Fr\'echet derivative of $H$ with respect to the law of the control $u^{*}$ in $u^{*}(t)$ vanish.
\end{proof}

\subsection{Application and example}

The main applications of mean-field dynamics that appear in the literature rely mainly on a dependence upon the probability measures through functions of scalar moments of the measures. More precisely, we assume that:
$$\ba{ll}
\ns\ds\ \ \ \ b(t,m_{1},m_{2},m_{3}) = \hat{b}(t,(\psi_{1},m_{1}),(\psi_{2},{m}_{2}),(\psi_{3},{m}_{3})),\\
\ns\ds f(t,x,\bar{x},m_{1},{m}_{2},u) = \hat{f}(t,x,\bar{x},(\gamma_{1},m_{1}),(\gamma_{2},{m}_{2}),u),\\
\ns\ds\qq\qq \ \ \ g(x,m) = \hat{g}(x,(\gamma_{3},m)).
\ea$$
for some scalar differentiable functions $\psi_{1}$, $\psi_{2}$, $\psi_{3}$, $\gamma_{1}$, $\gamma_{2}$, $\gamma_{3}$ with at most quadratic growth at $\infty$. The function $\hat{b}$ is defined on $[0,T] \times \mathbb{R}\times \mathbb{R} \times \mathbb{R}$, the function $\hat{f}$ is defined on $[0,T] \times \mathbb{R} \times \mathbb{R} \times \mathbb{R} \times \mathbb{R} \times \mathcal{U}$ and $\hat{g}$ is defined on $\mathbb{R} \times \mathbb{R}$. The notation $(\psi, m)$ denotes the integral of the function $\psi$ with respect to the probability measure $m$. The Hamiltonian that we defined in the previous section takes now the following form:
$$\ba{ll}
\ds H(t,x,\overline{x},u,m_{1},m_{2},m_{3},y,z) \\
\ns\ds= \hat{f}(t,x,\overline
{x},(\gamma_{1},m_{1}),(\gamma_{2},{m}_{2}),u)+ y \times \hat{b}(t,(\psi_{1},m_{1}),(\psi_{2},{m}_{2}),(\psi_{3},{m}_{3})) + z \times \sigma(t).
\ea$$
The functions $\hat{f}$, $\hat{b}$ and $\hat{g}$ are similar to the functions $f$, $b$, $g$, the only difference is that the measure for example $m_{1}$ is replaced by a numeric variable say $x^{\prime}$. Therefore according to the definition of the differentiability with respect to functions of measures recalled in the preliminaries, the derivative of the Hamiltonian with respect to the measure $m_{1}$ for instance, is computed as follows,
$$\ba{ll}
\ds \partial_{m_{1}}H(t,x,\overline{x},u,m_{1},m_{2},m_{3},y,z)(x^{\prime}) \\
\ns\ds= \partial_{x^{\prime}} \hat{f}(t,x,\overline
{x},(\gamma_{1},m_{1}),(\gamma_{2},{m}_{2}),u)  \gamma_{1}^{\prime}(x^{\prime})  + y \times \partial_{x^{\prime}}\hat{b}(t,(\psi_{1},m_{1}),(\psi_{2},{m}_{2}),(\psi_{3},{m}_{3})) \psi_{1}^{\prime}(x^{\prime}).
\ea$$
The terminal value of the adjoint BSDE (\ref{eq:2}) which is $Y(T)  = \partial_{x}g(T)+ {\dbE}^{\prime}[\partial_{m}{g}^{\prime}(T)({X}(T))]$, can be written in terms of the derivatives of the function $\hat{g}$ as follows:
\begin{equation*}
Y(T)  = \partial_{x}\hat{g}(X_{T},{\dbE}[\gamma_{3}(X_{T})])+ {\dbE}^{\prime}[\partial_{x^{\prime}}\hat{g}({X}^{\prime}_{T},{\dbE}[\gamma_{3}(X_{T})])]\gamma_{3}^{\prime}(X_{T}).
\end{equation*}

\subsubsection{Example}

We consider now a controlled state process $X = X^{\alpha}$ given by the following mean-field delayed stochastic differential equation:
\bel{wealth1}\left\{\ba{ll}
 dX(t) =  -[\beta_{1}(t)\dbE[X(t-\delta)]+ \beta_{2}\dbE[\alpha(t)]^{2}]dt
+ \beta_{3}(t)dB^{H}(t) , \qq t \in [0,T];\\
\ns\ds X(t) = x_{0}(t),\qq t \in [-\delta,0],
\ea\right.\ee
where $\delta > 0$ is a given constant, $\beta_{1}$, $x_{0}$ are given bounded deterministic functions, $\beta_{2}$ is a given positive constant, $\beta_{3}$ is a given deterministic function in $\mathcal{H}$. The integral with respect to the fBm is therefore a Wiener type integral and $\alpha \in \mathcal{A}_{\mathbb{F}}$ is our control process. The set $\mathcal{A}_{\mathbb{F}}$ are the admissible controls assumed to be square integrable $\mathbb{F}$-adapted processes with real positive values.
\\
\\
We want to minimize the expected value of $X_{T}^{2}$  with a minimal average use of energy, measured by the integral $\int_{0}^{T}{\dbE}[\alpha^{2}(t)]dt$, more precisely, the performance functional we consider in this example has the following form:
\begin{equation}\label{LQJ}
J(\alpha) = - \frac{1}{2} \Big(\dbE[ X_{T}^{2}] + \dbE[ \int_{0}^{T}\alpha^{2}(t)dt]\Big).
\end{equation}
Our goal is therefore to find the control process $\alpha^{*} \in \mathcal{A}_{\mathbb{F}}$, such that
\begin{equation}\label{LQ}
J(\alpha^{*}) = \sup_{\alpha \in \mathcal{A}_{\mathbb{F}}} J(\alpha).
\end{equation}
The Hamiltonian of our control problem is the following,
\begin{equation*}
H(t,x,\bar{x},\alpha,m_{1},m_{2},m_{3},y,z) = -\frac{1}{2} \alpha^{2} -y[\beta_{1}(t)(\textsc{Id},m_{2}) + \beta_{2}(\textsc{Id},m_{3})^{2}] + \beta_{3}(t) z.
\end{equation*}
So, according to the notations we used previously, we have\\
\\
$* \partial_{x}H(t) = 0$, \\
$* \partial_{\bar{x}}H(t) = 0$, \\
$* \partial_{m_{1}}H(t)(X(t))=0$, \\
$* \partial_{m_{2}}H(t)(X(t-\delta)) = - y \beta_{1}(t)$,\\
$* \partial_{m_{3}}H(t)(\alpha(t)) = -2  y \beta_{2} \mathbb{E}[\alpha(t)]$, \\
$* \partial_{\alpha}H(t) = - \alpha$.\\
\\
Hence, by calculating the second derivatives of $H$, we find that the Hessian matrix is semi definite negative and therefore the Hamiltonian $H$ is concave in $(x,\bar{x},\alpha,m_{1},m_{2},m_{3})$ under the condition $y \geq 0$.

\ms

Moreover the function $\alpha \in \mathbb{R}_{+} \mapsto H(t,x,\bar{x},\alpha,m_{1},m_{2},m_{3},y,z)$ is  concave and decreasing and therefore is maximal in $\alpha^{*} \triangleq 0$, note that once evaluating the derivative of $H$ with respect to $m_{3}$ in $\alpha^{*}$, we get $\partial_{m_{3}}H(t)(\alpha^{*}(t)) = 0$.

\ms

On the other hand the adjoint solution of the BSDE of our dynamic satisfies the following BSDE:
\begin{equation}\label{226}
\left\{
\begin{array}
[c]{lll}%
dY(t) =   \beta_{1}(t+\delta){\dbE}^{\prime}[Y^{\prime}(t+\delta)\chi_{[0,T-\delta]}(t)] dt + Z(t) dB^{H}(t), \qq t \in [0,T],\\
Y(T)  = -X_{T}.
\end{array}
\right.  %
\end{equation}
We propose a resolution of the previous anticipated BSDE by solving a sequence of linear BSDEs following this procedure:
\\
\\
\textbf{Step 1. } If $t \in [T-\delta,T]$, the previous BSDE takes the form
\begin{equation*}
\left\{
\begin{array}
[c]{lll}%
dY(t)  =  Z(t) dB^{H}(t) , \text{ \ \ } t \in  [T-\delta,T];\\
Y(T)  =  -X_{T}.
\end{array}
\right.  %
\end{equation*}
Then, under the hypothesis of Theorem \ref{FirstMethod}, this BSDE has a unique solution $(Y,Z)$ in $\tilde{\mathcal{V}}_{[T-\delta,T]} \times \tilde{\mathcal{V}}^{H}_{[T-\delta,T]}$.
\\
\\
\textbf{Step 2.} If $t \in [T-2\delta, T-\delta]$ and $T- 2 \delta > 0$, we obtain the BSDE
\begin{equation*}
\left\{
\begin{array}
[c]{lll}%
dY(t)  =  \beta_{1}(t+\delta){\dbE}^{\prime}[Y^{\prime}(t+\delta)] + Z(t) dB^{H}(t) , \qq t \in  [T- 2\delta,T - \delta];\\
Y(T- \delta)= \text{ \ } \text{known from step 1}.
\end{array}
\right.  %
\end{equation*}
We set  $ \psi_{\delta}(t)\triangleq \beta_{1}(t+\delta){\dbE}^{\prime}[Y^{\prime}(t+\delta)]$ which is the driver of this BSDE,  as $\psi_{\delta}(.)$ checks the hypothesis of Theorem \ref{FirstMethod}, this BSDE has a unique solution $(Y,Z)$ in $\tilde{\mathcal{V}}_{[T- 2\delta,T - \delta]} \times \tilde{\mathcal{V}}^{H}_{[T- 2\delta,T - \delta]}$.\\
\\
We continue like this by induction up to and including step n, where n is such that $T - n\delta \leq 0 < T- (n-1)\delta$ and we solve the corresponding BSDE on the time interval $[0, T-(n-1)\delta]$ and we solve the corresponding BSDE on the time interval $[0, T-(n-1)\delta]$.

\ms

According to Theorem \ref{Suff} and the previous calculus, an optimal decision of our control problem is the constant control $\alpha^{*} = 0$, the value of the performance functional in $\alpha^{*}$ is $J(\alpha^{*}) = - \frac{1}{2} \dbE[X_{T}^{2}]$, where $X \triangleq X^{\alpha^{*}}$ is the solution of the SDE (\ref{wealth1}), thus we have the following corollary.

\begin{corollary}
	The constant control $\alpha^{*} = 0$ is an optimal control for the control problem (\ref{LQ}), the corresponding triplet $(X^{\alpha^{*}},Y^{\alpha^{*}},Z^{\alpha^{*}})$ solves the couple of systems (\ref{wealth1}) and (\ref{226}) of (decoupled) forward-backward stochastic differential equations, and the value of the performance functional in the proposed optimal control $\alpha^{*}$ is $J(\alpha^{*}) = - \frac{1}{2} \dbE[X_{T}^{2}]$, where $X_{T} \triangleq X_{T}^{\alpha^{*}}$.
\end{corollary}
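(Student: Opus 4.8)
The plan is to derive the corollary from the sufficient stochastic maximum principle, Theorem~\ref{Suff}, specialised to the data of \eqref{wealth1}--\eqref{LQJ}: $\sigma=\beta_3$, $b(t,m_1,m_2,m_3)=-\beta_1(t)(\textsc{Id},m_2)-\beta_2(\textsc{Id},m_3)^2$, $f(t,x,\bar x,m_1,m_2,u)=-\tfrac12u^2$, $g(x,m)=-\tfrac12x^2$, with candidate control $\alpha^{*}\equiv0$ and $X_{*}=X^{\alpha^{*}}$ the associated state. Once conditions 1--4 of Theorem~\ref{Suff} are verified for this candidate, optimality of $(\alpha^{*},X_{*})$ follows at once, and the two remaining assertions (the forward--backward system and the value of $J(\alpha^{*})$) are immediate.

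First I would settle the existence of the adjoint pair $(Y_{*},Z_{*})$ solving \eqref{226}. Since $\delta$ is a constant delay and $K=0$, \eqref{226} is a mean-field anticipated fractional BSDE of the kind handled by Theorem~\ref{th}; however I would argue more concretely via the slab decomposition sketched just before the corollary. On $[T-\delta,T]$ the driver vanishes, so Proposition~\ref{WS} gives $Z_{*}(t)=-\beta_3(t)$ and $Y_{*}(t)=C_0-\int_0^t\beta_3(s)\,dB^{H}_{s}$ with $C_0=-\,\mathbb{E}[X_{*}(T)]$ a genuine constant; then $\mathbb{E}[Y_{*}(t+\delta)]$ is deterministic, so on $[T-2\delta,T-\delta]$ the equation is again a fractional BSDE with deterministic driver, solved by Proposition~\ref{WS}; iterating finitely many times down to $[0,T-(n-1)\delta]$, with $n=\lceil T/\delta\rceil$, and concatenating yields $(Y_{*},Z_{*})\in\widetilde{\mathcal{V}}_{[0,T]}\times\widetilde{\mathcal{V}}^{H}_{[0,T]}$, with $Z_{*}=-\beta_3$ deterministic and, on each slab, $Y_{*}$ an explicit affine functional of Wiener integrals of $\beta_3$.

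Next I would check conditions 1--4 of Theorem~\ref{Suff}. Condition~1, namely $(X^{u}Z_{*})=-\beta_3 X^{u}\in\mathrm{dom}(\delta^{H})$ for every admissible $u$, follows from Proposition~\ref{2}: $\beta_3$ is deterministic and lies in $\mathcal{H}$, and $X^{u}$ solves a linear fractional SDE, so the product belongs to $\mathbb{L}^{1,2}_{H}$. Conditions~3 and~4 are read off the Hamiltonian directly: with the three laws frozen, $u\mapsto H(t,\dots,u,\dots)=-\tfrac12u^2-Y_{*}(t)\big[\beta_1(t)\mathbb{E}[X_{*}(t-\delta)]+\beta_2(\mathbb{E}[\alpha^{*}(t)])^2\big]+\beta_3(t)Z_{*}(t)$ has derivative $-u$, hence is strictly decreasing on $\mathbb{R}_{+}$ and maximal at $u=0=\alpha^{*}(t)$, while $\partial_{m_3}H_{*}(t)(\alpha^{*}(t))=-2Y_{*}(t)\beta_2\,\mathbb{E}[\alpha^{*}(t)]=0$ because $\alpha^{*}\equiv0$. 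Condition~2, the joint concavity of $H$ and of $g$, is the crux: $g(x,m)=-\tfrac12x^2$ is concave, $H$ is concave in $(x,\bar x,u)$ (only $u$ appears, through $-\tfrac12u^2$), does not involve $m_1$ and is affine in $m_2$, but in $m_3$ the term $-Y_{*}(t)\beta_2(\textsc{Id},m_3)^2$ is concave only when $Y_{*}(t)\beta_2\ge0$, i.e. $Y_{*}(t)\ge0$. So the hard part will be establishing $Y_{*}(t)\ge0$; this has to be extracted from the explicit slab-by-slab form of $Y_{*}$ above, and is the only point at which the positivity/sign hypotheses on $\beta_1,\beta_3,x_0$ genuinely enter.

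Finally, granting Theorem~\ref{Suff}, $(\alpha^{*},X_{*})$ is an optimal couple. The associated forward--backward system is exactly \eqref{wealth1} driven by $\alpha\equiv0$ together with its adjoint \eqref{226} (with $X_T=X^{\alpha^{*}}_T$); since \eqref{wealth1} does not involve $(Y,Z)$, the coupling is one-directional, whence the triplet $(X^{\alpha^{*}},Y^{\alpha^{*}},Z^{\alpha^{*}})$ solves both systems by construction. The value is a one-line substitution: plugging $\alpha^{*}\equiv0$ into \eqref{LQJ} kills the term $\mathbb{E}\int_0^T(\alpha^{*})^2(t)\,dt$, leaving $J(\alpha^{*})=-\tfrac12\mathbb{E}[X_T^2]$ with $X=X^{\alpha^{*}}$, which is exactly the claim.
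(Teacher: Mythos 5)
Your route is the same as the paper's: specialise Theorem \ref{Suff} to the data of (\ref{wealth1})--(\ref{LQJ}), construct the adjoint pair for (\ref{226}) backwards slab by slab on $[T-(k+1)\delta,\,T-k\delta]$ (each slab has a deterministic driver, so Proposition \ref{WS} applies; the paper invokes Theorem \ref{FirstMethod} slab by slab, which is the same content), verify the maximum condition and the vanishing of $\partial_{m_3}H_{*}(t)(\alpha^{*}(t))$ at $\alpha^{*}\equiv 0$, and obtain $J(\alpha^{*})=-\tfrac{1}{2}\mathbb{E}[X_T^2]$ by substitution. Your explicit slab formula $Y_{*}(t)=-\mathbb{E}[X_{*}(T)]-\int_0^t\beta_3(s)\,dB^H_s$ on $[T-\delta,T]$ is correct, and conditions 1, 3 and 4 of Theorem \ref{Suff} are checked exactly as in the paper.

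The step you defer is, however, the one that cannot be closed as you propose, and it is to your credit that you isolated it. Concavity of $H$ in $m_3$ amounts to $Y_{*}(t)\beta_2\ge 0$, i.e. $Y_{*}(t)\ge 0$; the paper records this (``under the condition $y\ge 0$'') and never verifies it. Your plan to extract $Y_{*}\ge 0$ from the slab formula using ``positivity/sign hypotheses on $\beta_1,\beta_3,x_0$'' presupposes hypotheses that the statement does not contain: only boundedness of $\beta_1,x_0$, positivity of $\beta_2$, and $\beta_3\in\mathcal{H}$ are assumed. Moreover your own formula shows $Y_{*}(t)$ on $[T-\delta,T]$ is a constant minus a nondegenerate Wiener-type integral of $\beta_3$, hence takes negative values with positive probability whenever $\beta_3\not\equiv 0$; a pathwise bound $Y_{*}\ge 0$ is impossible. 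What can be salvaged is weaker: since $m_3$ enters $H$ only through the deterministic scalar $(\textsc{Id},m_3)^2=(\mathbb{E}[\alpha(t)])^2$ and $\bar{X}$ is deterministic, running the duality computation of Theorem \ref{Suff} for this example gives $J(\alpha)-J(0)\le -\int_0^T\big(\beta_2\,\mathbb{E}[Y_{*}(t)]\,(\mathbb{E}[\alpha(t)])^2+\tfrac{1}{2}\mathbb{E}[\alpha(t)^2]\big)\,dt$, so that optimality of $\alpha^{*}=0$ only requires $\beta_2\,\mathbb{E}[Y_{*}(t)]\ge -\tfrac{1}{2}$ for a.e.\ $t$ (using $(\mathbb{E}[\alpha])^2\le\mathbb{E}[\alpha^2]$), a condition on the deterministic function $\mathbb{E}[Y_{*}]$ obtainable from a terminal-value ODE. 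But that is still an additional hypothesis absent from the corollary; as stated, your argument and the paper's share the same unclosed gap at exactly this point.
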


\bibliographystyle{elsarticle-num}

\end{document}